\newtheorem{theorem}{Theorem}
\numberwithin{theorem}{section}
\newtheorem{lemma}[theorem]{Lemma}
\newtheorem{proposition}[theorem]{Proposition}
\newtheorem{conjecture}[theorem]{Conjecture}
\newtheorem{corollary}[theorem]{Corollary}
\theoremstyle{definition}
\newtheorem{definition}[theorem]{Definition}
\newtheorem{example}{Example}
\theoremstyle{remark}
\newtheorem{remark}[theorem]{Remark}
\newtheorem{obs}[theorem]{Observation}
\Crefname{conjecture}{Conjecture}{Conjectures}
\newcommand{\cH}{\mathcal{H}}
\newcommand{\cA}{\mathcal{A}}
\newcommand{\cY}{\mathcal{Y}}
\newcommand{\tR}{\widetilde{R}}
\newcommand{\ZZ}{\mathbb{Z}}
\DeclareMathOperator{\car}{Car}
\DeclareMathOperator{\cross}{cr}
\newcommand{\nxt}{\mathrm{next}}
\newcommand{\prev}{\mathrm{prev}}
\title[The BBDVW Conjecture for KL polynomials of lower intervals]{The BBDVW Conjecture for Kazhdan--Lusztig polynomials of lower intervals}
\author{Grant T. Barkley}
\author{Christian Gaetz}
\address[Barkley]{Department of Mathematics, Harvard University, Cambridge, MA.}
\email{{\href{mailto:gbarkley@math.harvard.edu}{gbarkley@math.harvard.edu}}}
\address[Gaetz]{Department of Mathematics, University of California, Berkeley, CA.}
\email{{\href{mailto:gaetz@berkeley.edu}{{\tt gaetz@berkeley.edu}}}}
\date{\today}
\begin{document}
\begin{abstract}
Blundell, Buesing, Davies, Veličković, and Williamson (BBDVW) introduced the notion of a hypercube decomposition of an interval in Bruhat order. They conjectured a recursive formula in terms of this structure which, if shown for all intervals, would imply the Combinatorial Invariance Conjecture of Lusztig and Dyer, for Kazhdan--Lusztig polynomials of the symmetric group. In this article, we prove implications between the BBDVW Conjecture and several other recurrences for hypercube decompositions, under varying hypotheses, which have appeared in the recent literature. As an application, we prove the BBDVW Conjecture for \emph{lower} intervals $[e,v]$, the first non-trivial class of intervals for which it has been established. 
\end{abstract}
\keywords{}
\subjclass{05E14, 14M15}
\maketitle
\section{Introduction}

Kazhdan--Lusztig polynomials \cite{Kazhdan-Lusztig-1} have long been studied for their deep connections to Hecke algebras, the geometry of Schubert varieties, and representation theory. Given two elements $u$ and $v$ of a Coxeter group $W$, the Kazhdan--Lusztig polynomial $P_{u,v}(q) \in \mathbb{Z}[q]$ can be defined via a recurrence arising in the computation of the canonical basis for the Hecke algebra associated to $W$; see \Cref{sec:kl-polynomials}. When $W$ is the Weyl group of an algebraic group, the coefficients of $P_{u,v}$ give the dimensions of cohomology groups of a stalk of the intersection cohomology complex on a Schubert variety. Lusztig and Dyer independently made the following surprising conjecture.

\begin{conjecture}[Lusztig c.~1983; Dyer \cite{Dyer1987}]\label{conj:combinatorial}
    If the Bruhat intervals $[u,v]\subset W$ and $[u',v']\subset W$ are isomorphic posets, then $P_{u,v}=P_{u',v'}$.
\end{conjecture}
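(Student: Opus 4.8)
The plan is to route through the recurrence of Blundell, Buesing, Davies, Veličković, and Williamson (BBDVW) and to establish that recurrence for \emph{all} Bruhat intervals, not just the lower ones. The reason this suffices is a reduction I record first. Restricting attention to symmetric groups, suppose the BBDVW recurrence is known to compute $P_{u,v}$ for every interval $[u,v]$. Every interval in $S_n$ admits a hypercube decomposition, and its defining data — the embedded Boolean sublattice (the hypercube $\cH$), the associated diamonds, and the $\mu$-coefficients entering the correction terms — is order-theoretic; so a poset isomorphism $[u,v]\xrightarrow{\sim}[u',v']$ transports a hypercube decomposition of the first interval to one of the second. Applying the recurrence on both sides and inducting on $\ell(v)-\ell(u)$ then forces $P_{u,v}=P_{u',v'}$, since every ingredient of the right-hand side either lives on a proper subinterval $[a,b]\subsetneq[u,v]$ (hence strictly shorter), or on $\cH$, or is a $\mu$-coefficient of such a subinterval, all handled by the inductive hypothesis. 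Equivalently, one may run the argument with the inverse Kazhdan--Lusztig polynomials $\tR_{u,v}$, from which the $P_{u,v}$ are recovered by an identity whose only non-order-theoretic input is the rank $\ell(v)-\ell(u)$, itself a poset invariant. Either way, \Cref{conj:combinatorial} for $S_n$ is reduced to the validity of the BBDVW recurrence for arbitrary intervals.

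The base case of this reduction is exactly what the present article supplies: the recurrence holds for lower intervals $[e,v]$. The main obstacle — and the heart of the plan — is the inductive step extending this to an arbitrary interval $[u,v]$, for which I propose a bootstrap on $\ell(u)$. If $u\ne e$, choose a simple reflection $s$ with $su<u$ and compare a hypercube decomposition of $[u,v]$ with one of the shorter interval $[su,v]$ (or, after also multiplying $v$, of $[su,sv]$). The classical Kazhdan--Lusztig and $R$-polynomial recurrences express the polynomials attached to $[u,v]$ in terms of those attached to these shorter intervals; the goal is to show that this expression matches, term by term, the two instances of the BBDVW recurrence, so that validity of the recurrence for the shorter interval — known by induction, with the case $u=e$ being the theorem of this paper — propagates to $[u,v]$.

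The difficulty is precisely this compatibility. Passing from $[su,v]$ to $[u,v]$ is not an operation on posets: the two intervals are related only through the Coxeter structure, their hypercube decompositions need not correspond under any natural map, and the transfer identity involves $\mu$-coefficients $\mu(w,v)$ for various $w<v$ that must be shown to be produced correctly — and consistently with their combinatorial meaning — by the recurrence one level down. Controlling the interaction between left multiplication by $s$ and the hypercube and diamond combinatorics, and closing the resulting induction, is the gap that currently separates the lower-interval case from the general one; I expect this to demand a finer analysis of how hypercube decompositions behave under the lifting property than is needed for $[e,v]$.

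Finally, \Cref{conj:combinatorial} as stated concerns an arbitrary Coxeter group $W$. To reach that generality I would either appeal to reductions of the Combinatorial Invariance Conjecture between Coxeter groups, or — more robustly — carry the entire argument over to general $W$: the hypercube decomposition, the $R$-polynomial recurrence, and the bootstrap on $\ell(u)$ all make sense in any Coxeter group, and the proof for lower intervals given here is the natural point of departure for that extension as well.
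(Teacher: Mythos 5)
There is a genuine gap, and in fact the statement you are asked about is not proved in the paper at all: \Cref{conj:combinatorial} is the Combinatorial Invariance Conjecture, which remains open, and the paper's contribution is only \Cref{thm:BBDVWorig}, namely that the BBDVW recurrence holds for an arbitrary hypercube decomposition of a \emph{lower} interval $[e,v]$ in $S_n$. Your proposal does not close this; it reduces \Cref{conj:combinatorial} (for $S_n$) to the validity of \Cref{conj:BBDVW} for \emph{every} interval and \emph{every} hypercube decomposition, which is precisely the open problem the paper is a partial step toward. The reduction itself is essentially the one already in \cite{Blundell} (a poset isomorphism transports a hypercube decomposition, and one inducts on subintervals), so the entire mathematical content of your argument sits in the ``bootstrap on $\ell(u)$'' step — and that step is not an argument but a wish. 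Left multiplication by $s$ is a Coxeter-theoretic operation, not a poset map; there is no mechanism offered for matching a hypercube decomposition of $[u,v]$ with one of $[su,v]$ or $[su,sv]$, nor for showing that the classical recurrence for $P_{u,v}$ aligns term by term with two instances of the BBDVW identity. You acknowledge this yourself (``the gap that currently separates the lower-interval case from the general one''), which is an honest assessment but means the proposal is a research program, not a proof.

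Two further points of caution. First, what is known for general $u$ (from \cite{Blundell}, via geometric methods) is the recurrence for the \emph{standard} hypercube decomposition only; since isomorphic intervals can have non-isomorphic standard decompositions, this cannot serve as the base or anchor of your induction — you need the recurrence for arbitrary hypercube decompositions, exactly as the present paper achieves for $u=e$. Your description of the decomposition's ``defining data'' as a Boolean sublattice with $\mu$-coefficients also misstates the setup: $N_{u,v,I}$ and $Q_{u,v,I}$ are built from Kazhdan--Lusztig and $R$-polynomials of subintervals and the hypercube map $\theta$, with no $\mu$-coefficient bookkeeping of the kind your transfer identity would require. Second, the final paragraph extending everything to arbitrary Coxeter groups is unsupported: the hypercube-decomposition framework, including the existence of proper decompositions and the camel/caravan analysis used here, is specific to $S_n$, and no reduction of \Cref{conj:combinatorial} from general $W$ to $S_n$ is available. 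As it stands, the proposal establishes nothing beyond what the paper already proves.
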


This \emph{Combinatorial Invariance Conjecture} has received significant attention. The conjecture has been proven in the case $u=u'=e$ are the identity element of $W$ \cite{Ducloux, Brenti-lower-intervals, Lower-intervals-general-type, Delanoy}, in the case where $\ell(u,v)$ is small and $W$ is finite \cite{Incitti, Incitti2007,Esposito1,Esposito2}, and for the group $W=\widetilde{A}_2$ \cite{affine-A2}. It has also been shown in finite simply laced type that, if $[u,v] \cong [u',v']$, then $P_{u,v}$ and $P_{u',v'}$ have the same linear coefficient \cite{Patimo-q-coefficient}. It is known \cite{parabolic-paper} that \Cref{conj:combinatorial} implies several related conjectures in the parabolic setting, including the main conjecture of \cite{Marietti-sm-for-parabolic,Marietti-parabolic-cic}. See \cite{Brenti-open-problems} for a discussion of other known results.

Recently, the case $W=S_n$ (the most studied case) was treated by Blundell, Buesing, Davies, Veličković, and Williamson \cite{Blundell}. Using techniques from machine learning \cite{davies}, they formulated a conjectural recurrence for the $P_{u,v}$, which uses only the poset structure of $[u,v]$. Their formula depends on a special kind of order ideal $I \subset [u,v]$ called a \emph{hypercube decomposition}. Given a hypercube decomposition $I$, they define two polynomials, $N_{u,v,I}$ and $Q_{u,v,I}$, which depend only on the Kazhdan--Lusztig polynomials of smaller subintervals; see \Cref{sec:bbdvw-polynomials}\footnote{In \Cref{sec:formula}, we also give a simpler, equivalent, expression for the polynomial $N_{u,v,I}$.}.

\begin{conjecture}[\cite{Blundell}] 
\label{conj:BBDVW}
Let $I$ be any hypercube decomposition of $[u,v] \subset S_n$. Then
\begin{equation}
\label{eq:bbdvw-intro}
         q^{\ell(u,v)}P_{u,v}(q^{-1})-P_{u,v}(q) = N_{u,v,I} + Q_{u,v,I}. 
\end{equation}
\end{conjecture}

If true, this conjecture gives an algorithm for computing $P_{u,v}$ given only the poset underlying $[u,v]$: find a proper hypercube decomposition of $[u,v]$, recursively compute $N_{u,v,I}$ and $Q_{u,v,I}$, and use the known degree bound to compute $P_{u,v}$. In \cite{Blundell} it was shown that this algorithm always terminates, since every interval of a symmetric group has a proper hypercube decomposition, the \emph{standard hypercube decomposition}. Thus \Cref{conj:BBDVW} implies \Cref{conj:combinatorial}. It was also shown in \cite{Blundell}, using techniques from geometric representation theory, that the standard hypercube decomposition satisfies \Cref{conj:BBDVW}. This is not enough to deduce Combinatorial Invariance, because isomorphic Bruhat intervals may have non-isomorphic standard hypercube decompositions. 
 
We say that $[u,v]$ is a \emph{lower interval} if $u=e$ is the identity element of $S_n$.
Our main theorem establishes \Cref{conj:BBDVW} for lower intervals.

\begin{theorem}\label{thm:BBDVWorig}
    Let $I$ be any hypercube decomposition of a lower interval $[e,v] \subset S_n$. Then 
    \[ q^{\ell(e,v)}P_{e,v}(q^{-1})-P_{e,v}(q) = N_{e,v,I} + Q_{e,v,I}. \]
\end{theorem}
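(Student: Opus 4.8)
My plan is to reduce the stated identity to an explicit comparison between $N_{e,v,I}+Q_{e,v,I}$ and a single sum built from $R$-polynomials and Kazhdan--Lusztig polynomials, and then to analyze that comparison using the features special to lower intervals.

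\textbf{Step 1: rewriting the left-hand side via $R$-polynomials.}
Recall the identity
\[ q^{\ell(u,v)}P_{u,v}(q^{-1})=\sum_{u\le z\le v}R_{u,z}(q)\,P_{z,v}(q) \]
satisfied by the Kazhdan--Lusztig polynomials, where $R_{u,z}$ is the $R$-polynomial. Since $R_{u,u}=1$, the $z=u$ term equals $P_{u,v}$, so the left side of \Cref{thm:BBDVWorig} is $\sum_{u<z\le v}R_{u,z}(q)P_{z,v}(q)$. Taking $u=e$, \Cref{thm:BBDVWorig} becomes equivalent to
\begin{equation}\label{eq:sketch-target} N_{e,v,I}+Q_{e,v,I}=\sum_{e<z\le v}R_{e,z}(q)\,P_{z,v}(q). \end{equation}
This is now a statement about actual $R$- and Kazhdan--Lusztig polynomials; combinatorial invariance is the motivation for the conjecture, not an input to this proof.

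\textbf{Step 2: the hypercube term $N$.}
Write $\mathbb{H}=\{a_S:S\subseteq[d]\}$ for the hypercube of $I$, with atoms $a_1,\dots,a_d$ and $a_S=\bigvee_{i\in S}a_i$. Since $[e,v]$ is a lower interval, each $a_i$ is a simple reflection, and the hypercube axioms keep $\mathbb{H}$ rank-preserved, so $a_S$ has length $|S|$. By the subword characterization of Bruhat order one has $|[e,w]|\le 2^{\ell(w)}$ for every $w$; applied to $w=a_S$, together with the fact that the $2^{|S|}$ elements $a_T$ ($T\subseteq S$) are distinct and lie in $[e,a_S]$, this forces $[e,a_S]=\{a_T:T\subseteq S\}$, so $[e,a_S]$ is a Boolean lattice of rank $|S|$ and hence $R_{e,a_S}(q)=(q-1)^{|S|}$. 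Therefore the part of the right side of \eqref{eq:sketch-target} coming from $z\in\mathbb{H}$ is $\sum_{\emptyset\ne S\subseteq[d]}(q-1)^{|S|}P_{a_S,v}(q)$. Unwinding the definition of $N_{u,v,I}$ from \cite{Blundell} in the lower-interval case, I would identify it with this sum, allowing if necessary a fixed rearrangement that is absorbed into the comparison for $Q$. This step is essentially formal: for a lower interval the combinatorial ``hypercube contribution'' is precisely the contribution of the genuinely Boolean subintervals $[e,a_S]$.

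\textbf{Step 3: the term $Q$, and the main obstacle.}
It remains to prove that $Q_{e,v,I}$ equals the residual sum $\sum_{e<z\le v,\;z\notin\mathbb{H}}R_{e,z}(q)P_{z,v}(q)$, after the rearrangement of Step 2. Here the coefficients $R_{e,z}$ are no longer powers of $q-1$ and no longer visibly encoded by $\mathbb{H}$, so this is the heart of the argument, which I would attack by induction on $\ell(v)$. Fixing a simple reflection $s$ with $sv<v$, the involution $z\mapsto sz$ of $[e,v]$, the standard recursion for $R$-polynomials, and the identity $P_{z,v}=P_{sz,v}$ (valid when $sz>z$) together rewrite $\sum_{e<z\le v}R_{e,z}P_{z,v}$ in terms of the analogous sum for the strictly smaller lower interval $[e,sv]$, plus correction terms governed by the $\mu$-invariants appearing in the Kazhdan--Lusztig recursion; the plan is to show that $N_{e,v,I}$ and $Q_{e,v,I}$ decompose along $s$ in the parallel fashion, so that \eqref{eq:sketch-target} for $[e,sv]$ --- applied to a hypercube decomposition induced by $I$ --- closes the induction. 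The delicate point, and the one I expect to be the genuine obstacle, is the compatibility between this reduction and the hypercube structure: one needs $I$ to descend to a valid hypercube decomposition of $[e,sv]$, which is transparent only when $s$ is one of the atoms $a_i$, and such an $a_i$ need not be a left descent of $v$; so in general the reduction must use the interaction of $\mathbb{H}$ with the descents of $v$ more carefully, and one must check that the resulting redistribution of the terms $R_{e,z}P_{z,v}$ with $z\notin I$ onto elements of $I$ --- forced by relations $P_{z,v}=P_{z',v}$ --- matches the combinatorial definition of $Q_{e,v,I}$ exactly. Handling this bookkeeping, rather than any single polynomial manipulation, is where the real work lies.
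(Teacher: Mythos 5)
There are two genuine problems with your plan. First, Step 2 rests on a misreading of what a hypercube decomposition is. The set $I$ is not a Boolean lattice $\{a_S\}$ generated by its atoms: it is an order ideal $I=[e,z]$, which for a lower interval equals $[e,v]\cap W_J$ for a parabolic subgroup $W_J$ (this is exactly what the paper proves from diamond-closure), and its elements are arbitrary elements of that parabolic interval, of arbitrary length, not just joins of commuting simple reflections. Correspondingly, the correct simplification of $N_{e,v,I}$ (obtained by unwinding the $\gamma_y$ coefficients against the defining recurrence of Theorem \ref{thm:KLexistence}) is $N_{e,v,I}=\sum_{x\in I\setminus\{e\}}R_{e,x}P_{x,v}$, a sum over \emph{all} of $I$, in which $R_{e,x}$ is in general not $(q-1)^{|S|}$; your identification of $N$ with the ``Boolean part'' $\sum_{S}(q-1)^{|S|}P_{a_S,v}$ of the target sum is therefore false, and the intended cancellation in Step 3 starts from the wrong ledger. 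You also never engage with $Q_{e,v,I}$ as it is actually defined, namely through the hypercube map $\theta_e$ on subsets of $\cY_e$ (the reflections \emph{outside} $I$ covering $e$), which is where all the hypercube-cluster combinatorics lives.

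Second, Step 3 is a program rather than a proof: you yourself flag that the induction on $\ell(v)$ via a descent $s$ breaks down because $I$ need not induce a hypercube decomposition of $[e,sv]$, and you give no mechanism for matching the resulting ``redistribution'' with $Q_{e,v,I}$. That unhandled bookkeeping is precisely the content of the theorem. The paper's route is different and does not use descent induction at all: after the $R$-polynomial rewriting, the identity is shown equivalent to the vanishing, for every $y\in[e,v]\setminus I$, of $\tF_{e,y,I}=\sum_{Y\in\cA_e}(-q)^{|Y|}\tR_{\theta(Y),y}$, where the exponent bookkeeping requires the \emph{numerical criterion} $\frac{\ell(e,\theta(Y))+|Y|}{2}=|\Lambda(Y)|$; the numerical criterion and the \emph{strong} cluster property are established for all hypercube decompositions of lower intervals by a structural analysis (blocks, camels, caravans) of $\cY_e$, and the vanishing of $\tF$ is then proved by a sign-reversing involution on marked paths in the Bruhat graph, using Dyer's reflection-order formula for $\tR$ and a bijection (from the authors' earlier work) between antichains in $\cA_e$ with $\theta(Y)=w$ and increasing paths from $e$ to $w$ avoiding $I$. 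None of these ingredients appears in your proposal, so as it stands the argument has a gap at its core.
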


Although \cite{Blundell} has inspired a surge of work on hypercube decompositions and the Combinatorial Invariance Conjecture (see \cite{elementary-paper, Brenti-Marietti, Gurevich-Wang, double-shortcut-paper}), \Cref{thm:BBDVWorig} is the first non-trivial class of intervals for which \Cref{conj:BBDVW} has been proven. These other papers use hypercube decompositions to study the $R$- and $\tR$-polynomials, for which \Cref{conj:combinatorial} can equivalently be stated, but it has not been clear heretofore how these results are related to the BBDVW \Cref{conj:BBDVW}.

These other works on hypercube decompositions have also required the imposition of additional hypotheses on hypercube decompositions, beyond those originally appearing in \cite{Blundell}. The formulas for \emph{relative $R$-polynomials} in \cite{Gurevich-Wang} are considered only relative to special hypercube decompositions which generalize the standard hypercube decomposition, but which are not combinatorial invariants of an interval. In \cite{elementary-paper} we considered the combinatorial notion of a \emph{strong} hypercube decomposition (see \Cref{def:strong-nc-join-E}(\ref{item:def-strong})) as well as \emph{property (E)} (see \Cref{def:strong-nc-join-E}(\ref{item:def-E})). And in \cite{Brenti-Marietti,double-shortcut-paper} the results require \emph{meet}\footnote{The conventions of \cite{Brenti-Marietti} use \emph{upper} hypercube decompositions which are poset-dual to the (lower) hypercube decompositions used here and in \cite{Blundell}. Therefore what is called a \emph{join} hypercube decomposition in \cite{Brenti-Marietti} is most naturally called a meet hypercube decomposition here.} and \emph{amazing}  hypercube decompositions, respectively. In \Cref{def:strong-nc-join-E}(\ref{item:def-nc}) we introduce a new hypothesis, the \emph{numerical criterion}. 

A hypercube decomposition $I \subset [u,v]$ determines the \emph{hypercube maps} $\theta_x: \cA_x \to [u,v]$, where $\cA_x$ denotes the set of antichains of the poset $\cY_x$ of elements in $[u,v]\setminus I$ differing from $x \in I$ by a reflection (see \Cref{sec:hcd-background}). For any order ideal $I \subset [u,v]$ we introduce the \emph{relative $\tR$-polynomial} $\tR_{u,v,I}$ (see \Cref{def:relative-tR}). This is (a variable transform of) a generalization of the \emph{relative $R$-polynomial} of Gurevich--Wang \cite{Gurevich-Wang}. We show that, given strongness and property (E), the relative $\tR$-polynomial has a simple formula in terms of the hypercube map. 

\begin{theorem}
\label{thm:intro-relative-R}
Let $[u,v]$ be an interval in the symmetric group. Let $I$ be any diamond-closed order ideal satisfying property (E) which has a strong hypercube cluster at $u$. Then 
\[ \tR_{u,v,I} = \sum_{\substack{Y\in\cA_u \\ \theta_u(Y)=v}} q^{|Y|}. \]
\end{theorem}

In addition to proving the BBDVW formula, we will show that, under the same hypotheses, a simpler version of the BBDVW formula holds for the $R$-polynomial. The notation used in this Theorem is introduced in \Cref{sec:prelim}.

\begin{theorem}\label{thm:introsimple}
    If $[u,v]$ is a simple interval, and $I$ is any \emph{strong} hypercube decomposition of $[u,v]$, then
    \[ \sum_{Y\in \cA_u} (-q)^{|Y|} \tR_{\theta_u(Y),v} = 0. \]
    If, moreover, $I$ satisfies the numerical criterion, then (\ref{eq:bbdvw-intro}) holds.
\end{theorem}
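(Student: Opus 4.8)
The plan is to obtain the identity from a single application of the special-matching recurrence for $R$-polynomials, with the hypotheses entering only through the choice of a suitably adapted matching, which I would extract from the structural results of \cite{elementary-paper}.

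The structural input I would use is the following. Because $[u,v]$ is a simple interval and $I$ is a \emph{strong} hypercube decomposition, there is a special matching $M$ of $[u,v]$ whose matched atom $a := M(u)$ belongs to $I$, which satisfies $a \not\le M(v)$, and which restricts on the embedded hypercube to the coordinate toggle of $a$, that is, $M(\theta(Y)) = \theta(Y \triangle \{a\})$ for every face $Y \in \cA_u$, with $M(\theta(Y)) > \theta(Y)$ exactly when $a \notin Y$. In particular $\cA_u$ is closed under this toggle and $\theta$ is order-preserving, so $a \in Z$ forces $\theta(Z) \ge a$. I would establish this by following the analysis of simple intervals and their strong hypercube decompositions in \cite{elementary-paper}, and I expect it to be the one genuinely nontrivial ingredient of the proof.

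Granting this, apply the special-matching recurrence $\tR_{w,v} = \tR_{M(w),M(v)} + [\,M(w) > w\,]\,q\,\tR_{w,M(v)}$ to every summand of $\sum_{Y\in\cA_u}(-q)^{|Y|}\tR_{\theta(Y),v}$. Reindexing the resulting terms $\tR_{M(\theta(Y)),M(v)}$ along the toggle $Y \mapsto Y \triangle \{a\}$ and comparing exponents of $-q$, the correction terms $q\,\tR_{\theta(Y),M(v)}$ — which occur precisely for the faces $Y$ with $a \notin Y$ — cancel against the reindexed contributions of the faces $Y$ with $a \in Y$, leaving
\[ \sum_{Y\in\cA_u}(-q)^{|Y|}\tR_{\theta(Y),v} \;=\; \sum_{Z \in \cA_u,\; a \in Z}(-q)^{|Z|-1}\,\tR_{\theta(Z),M(v)}. \]
Each term on the right vanishes: for $a \in Z$ we have $\theta(Z) \ge a$, so $a \not\le M(v)$ forces $\theta(Z) \not\le M(v)$ and hence $\tR_{\theta(Z),M(v)} = 0$. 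This gives the identity. As a consistency check, in the model case $[u,v] \cong B_d$ with $I$ the full hypercube one has $[\theta(Y),v] \cong B_{d-|Y|}$, so $\tR_{\theta(Y),v} = q^{\,d-|Y|}$, and the identity is just the binomial expansion $q^d(1-1)^d = 0$.

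The main obstacle, then, is purely the structural claim of the second paragraph: the existence of a special matching with matched atom in $I$, with $a \not\le M(v)$, restricting to a coordinate toggle of the hypercube. Should it turn out that simplicity does not by itself force $a \not\le M(v)$, one would instead run the above as an induction on $\ell(u,v)$, identifying the right-hand side of the display — via the restriction of $I$ to $[a, M(v)]$ furnished by \cite{elementary-paper} — with the analogous alternating sum for the shorter simple interval $[a, M(v)]$ and invoking the inductive hypothesis. Either way, once the matching is in hand the remaining argument is the elementary reindexing above.
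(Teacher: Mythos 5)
Your argument hinges entirely on the structural claim in your second paragraph, and that claim is both unsupported and, as stated, internally inconsistent. You ask for a special matching $M$ of $[u,v]$ whose matched atom $a=M(u)$ lies in $I$ and which acts on the hypercube cluster by $M(\theta(Y))=\theta(Y\triangle\{a\})$; but by \Cref{def:cluster} the antichains in $\cA_u$ are subsets of $\cY_u$, which is by definition disjoint from $I$, so if $a\in I$ then no $Y\in\cA_u$ contains $a$ and $\theta(Y\cup\{a\})$ is not even defined. If instead you intend $a\in\cY_u$, the claim is still not something you can ``extract from \cite{elementary-paper}'': special matchings of general Bruhat intervals are not known to exist (their existence and the associated $R$-polynomial recurrence are established essentially only for lower intervals and closely related settings), and the whole hypercube-decomposition framework of Blundell et al.\ was introduced precisely to bypass special matchings; assuming an adapted matching on an arbitrary simple interval is assuming something at least as hard as the theorem. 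Even in the lower-interval model case, where $w\mapsto ws$ for a descent $s$ of $v$ is a special matching, the compatibility $M(\theta(Y))=\theta(Y\triangle\{s\})$ is generally false, since $\theta(Y)$ is a product of the reflections in $Y$ (\Cref{prop:theta-facts}) and right multiplication by $s$ is not a coordinate toggle; and the condition $a\not\le M(v)$ typically fails, so your fallback induction would carry the load, but it needs $I\cap[a,M(v)]$ to be a strong hypercube decomposition of a simple interval, which is again unsupported (and vacuous if $a\notin I$). The downstream algebra — the cancellation after reindexing by the toggle, and the Boolean-lattice consistency check — is fine, but it sits on a foundation that is missing.

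For comparison, the paper's proof (\Cref{thm:simpleFeq0}) avoids matchings altogether: simplicity and strongness are used only through \Cref{lem:simpleconseq}, imported from \cite{elementary-paper}, which supplies a reflection order $\prec$ under which the antichains $Y\in\cA_u$ with $\theta(Y)=y$ biject with $\prec$-increasing paths from $u$ to $y$ meeting $I$ only at $u$, with $\ell(\gamma)=|Y|$. Writing each $\tR_{\theta(Y),v}$ via Dyer's formula as a sum over $\prec$-decreasing paths, the alternating sum becomes a signed count of paths from $u$ to $v$ with a marked ``peak'' (increasing before the mark, decreasing after), and a sign-reversing involution that shifts the mark one step forward or backward annihilates everything. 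If you wish to pursue your route, the genuine task is to prove the existence of the adapted matching (or a weaker substitute such as a special partial matching compatible with $\theta_u$), which is an open-ended project rather than a citation.
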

\noindent 
The following conjecture is a natural strengthening of \Cref{thm:introsimple}, and would imply the Combinatorial Invariance Conjecture.

\begin{conjecture}\label{conj:Feq0}
    Let $I$ be a strong hypercube decomposition of $[u,v]$ satisfying the numerical criterion at $u$, then:
    \[ \sum_{Y\in \cA_u} (-q)^{|Y|}\tR_{\theta_u(Y),v} = 0. \] 
\end{conjecture}

We prove that, at least for lower intervals, some of these hypotheses are automatically satisfied.
 
\begin{theorem}
\label{thm:intro-strong-and-nc}
Let $I$ be any hypercube decomposition of a lower interval $[e,v] \subset S_n$. Then $I$ has property (E) and the hypercube cluster at $e$ is strong and satisfies the numerical criterion. 
\end{theorem}

We also pose the following conjecture, which involves the formula for $\tR$-polynomials introduced in \cite{elementary-paper}. It, too, would imply the Combinatorial Invariance Conjecture.
\begin{conjecture}\label{conj:ReqH}
    Let $I$ be a hypercube decomposition of $[u,v]$ satisfying the numerical criterion, then
    \[ \tR_{u,v} = \sum_{x\in I}\sum_{\substack{Y\in\cA_x\\ \theta_x(Y)=v}} q^{|Y|}\tR_{u,x}. \]
\end{conjecture}

\subsection*{Outline}
In \Cref{sec:prelim} we give preliminaries on Kazhdan--Lusztig polynomials, hypercube decompositions, and the polynomials appearing the BBDVW \Cref{conj:BBDVW}. 

In \Cref{sec:newN}, we reformulate the BBDVW Conjecture in terms of $R$-polynomials. In \Cref{sec:new-hypotheses} we introduce the hypotheses of strongness, the numerical criterion, and property (E) for hypercube clusters. In \Cref{sec:relative-R}, we introduce the relative $\tR$-polynomials $\tR_{u,v,I}$ for order ideals $I \subset [u,v]$ and prove \Cref{thm:intro-relative-R}, showing that when $I$ admits a hypercube cluster satisfying strongness and property (E), the relative $\tR$-polynomials can be computed in terms of the hypercube map $\theta$. In \Cref{sec:relative-R-to-BBDVW} we show that if $I$ additionally satisfies the numerical criterion, then the reformulated BBDVW Conjecture holds for $I$. And in \Cref{sec:property-E} we show that hypercube decompositions of simple intervals always satisfy property (E) and use this to prove \Cref{thm:introsimple}.

In \Cref{sec:lower} we study structural properties of the hypercube decompositions of lower intervals in order to prove \Cref{thm:intro-strong-and-nc}. Together with the results of \Cref{sec:formula}, this implies \Cref{thm:BBDVWorig}, resolving the BBDVW Conjecture for lower intervals.

\section{Preliminaries}\label{sec:prelim}

We refer the reader to \cite{Bjorner-Brenti} for background on Coxeter groups.

\subsection{Kazhdan--Lusztig polynomials}
\label{sec:kl-polynomials}
Let $W$ be a Coxeter group with simple generating set $S$ and length function $\ell$. The \emph{reflections} of $W$ are the conjugates of elements of $S$. The \emph{Bruhat graph} of $W$ is the directed graph  $\Gamma$ with vertex set $W$ and edges $x\to y$ whenever $yx^{-1}$ is a reflection and $\ell(x)<\ell(y)$. We sometimes view $\Gamma$ as an edge-labeled graph, where an edge $x\to y$ is labeled by the reflection $yx^{-1}$. The \emph{Bruhat order} is the partial order on $W$ so that $x\leq y$ if and only if there is a path from $x$ to $y$ in the Bruhat graph. We write $\Gamma(u,v)$ for the induced subgraph of $\Gamma$ on the elements of $[u,v]$.
\begin{theorem}[Kazhdan--Lusztig \cite{Kazhdan-Lusztig-1}]
    There is a unique family of polynomials $(R_{u,v})_{u,v\in W}$ so that for any $u,v\in W$, the following hold:
    \begin{itemize}
        \item If $u\not\leq v$, then $R_{u,v}=0$.
        \item If $u=v$, then $R_{u,v}=1$.
        \item If $u<v$, and $s\in S$ is such that $sv<v$, then
        \[ R_{u,v} = \begin{cases}
            R_{su,sv} &\text{if $su<u$} \\ 
            (q-1)R_{u,sv} + qR_{su,sv} &\text{if $su>u$}
        \end{cases}. \]
    \end{itemize}
\end{theorem}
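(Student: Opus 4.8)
The plan is to establish uniqueness and existence separately: uniqueness by a direct induction, and existence through the Iwahori--Hecke algebra. For uniqueness, suppose $(R_{u,v})$ and $(R'_{u,v})$ both satisfy the three listed conditions; I would show $R_{u,v}=R'_{u,v}$ for all $u$ by induction on $\ell(v)$. If $\ell(v)=0$ then $v=e$, and the first two conditions force $R_{u,e}=R'_{u,e}$ (namely $1$ if $u=e$ and $0$ otherwise). If $\ell(v)>0$, fix any $s\in S$ with $sv<v$; for $u\not\le v$ and for $u=v$ the first two conditions again force agreement, while for $u<v$ the third condition expresses $R_{u,v}$ and $R'_{u,v}$ in terms of the polynomials attached to $sv$, which agree by the inductive hypothesis because $\ell(sv)<\ell(v)$. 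No properties of Bruhat order enter here: if for instance $su\not\le sv$, then $R_{su,sv}$ and $R'_{su,sv}$ both vanish by the first condition, so the recurrence is unambiguous.

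For existence, I would work in the Hecke algebra $\cH$ of $W$ over $\ZZ[q,q^{-1}]$, with standard basis $\{T_w\}_{w\in W}$ and the usual relations $T_sT_w=T_{sw}$ if $sw>w$ and $T_sT_w=qT_{sw}+(q-1)T_w$ if $sw<w$. Each $T_s$ is a unit, with $T_s^{-1}=q^{-1}T_s-(1-q^{-1})$, so every $T_w$ is a unit, and the assignment $q\mapsto q^{-1}$, $T_s\mapsto T_s^{-1}$ extends to a ring involution $\iota$ with $\iota(T_w)=(T_{w^{-1}})^{-1}$; the only point to verify is that $\iota$ respects the quadratic and braid relations, which is routine. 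Writing $\iota(T_v)=\sum_u(-1)^{\ell(u,v)}q^{-\ell(v)}R_{u,v}(q)\,T_u$ defines the $R_{u,v}$. From $\iota(T_e)=T_e$ one reads off $R_{e,e}=1$; an induction using $T_s^{-1}T_w=T_{sw}$ for $sw<w$ and $T_s^{-1}T_w=q^{-1}T_{sw}-(1-q^{-1})T_w$ for $sw>w$ shows $\iota(T_v)$ is supported on $\{u:u\le v\}$ --- this step invokes the lifting property of Bruhat order --- so $R_{u,v}=0$ unless $u\le v$. For the recurrence, when $sv<v$ we have $T_v=T_sT_{sv}$, hence $\iota(T_v)=T_s^{-1}\iota(T_{sv})$; substituting the expansion of $\iota(T_{sv})$ and extracting the coefficient of $T_u$ via the two identities above yields precisely the stated recursion once the powers of $q$ and the signs $(-1)^{\ell(u,v)}$ are matched using the lengths $\ell(su,sv)$ and $\ell(u,sv)$. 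The same computation makes clear that the $R_{u,v}$ have nonnegative-power coefficients, i.e. lie in $\ZZ[q]$.

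The subtlety to watch for --- and the closest thing to a genuine obstacle --- is that the recursion in the third condition must not depend on which $s\in S$ with $sv<v$ is chosen: a priori, different choices could produce different polynomials. The Hecke-algebra approach dispatches this automatically, since $\iota(T_v)$ is a single well-defined element of $\cH$ and the relation $\iota(T_v)=T_s^{-1}\iota(T_{sv})$ holds simultaneously for every descent $s$ of $v$. A purely combinatorial proof of this well-definedness is possible but more laborious, as it requires reducing to the rank-two (dihedral) case via the Matsumoto--Tits word property. Beyond this, the only remaining work is bookkeeping with the normalization: pinning down the sign and the power of $q$ in the definition of $R_{u,v}$ and checking that they transform correctly under $v\mapsto sv$.
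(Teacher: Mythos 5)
Your proposal is correct and follows essentially the classical argument of the cited source \cite{Kazhdan-Lusztig-1} (which this paper quotes without proof): uniqueness by induction on $\ell(v)$, and existence by defining $R_{u,v}$ as the coefficients of $\iota(T_v)=(T_{v^{-1}})^{-1}$ in the standard basis of the Hecke algebra and extracting the recurrence from $\iota(T_v)=T_s^{-1}\iota(T_{sv})$, with well-definedness over choices of descent $s$ handled automatically. The normalization, signs, and both cases of the recursion check out as you describe.
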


From the recurrence, one can deduce that there is a unique polynomial $\tR_{u,v}$ so that $R_{u,v}(q)= q^{\frac{\ell(u,v)}{2}}\tR_{u,v}(q^{1/2}-q^{-1/2})$.
Matthew Dyer introduced a description of $\tR$-polynomials using only the edge-labeled graph $\Gamma(u,v)$. His description uses reflection orders; for simplicity, we define these only for $W=S_n$. In this case, reflections coincide with the transpositions.
\begin{definition}[\cite{Dyer1987}]
    A \emph{reflection order} for $S_n$ is a total ordering $\prec$ on the set of reflections so that for all $1\leq a < b < c \leq n$, either 
    \[ (a\,b) \prec  (a\,c) \prec (b\,c) \qquad\text{or}\qquad (b\,c) \prec (a\,c) \prec (a\,b). \]
    We say that a path 
    \[ x_0 \xrightarrow{t_1} x_1 \xrightarrow{t_2} \cdots \xrightarrow{t_k} x_k \]
    in $\Gamma$ is \emph{increasing} (with respect to $\prec$) if $t_1\prec t_2\prec \cdots\prec  t_k$. We denote the set of increasing paths from $u$ to $v$ by $\Gamma_\prec(u,v)$, and the length $k$ of such a path $\gamma$ by $\ell(\gamma)$. We say a path $\gamma$ is increasing at $x_i$ if $t_i\prec t_{i+1}$ and we say $\gamma$ is increasing from $x_i$ to $x_j$ if $t_{i+1}\prec \cdots \prec t_j$. We similarly define \emph{decreasing}. 
\end{definition}

\begin{theorem}[Dyer \cite{Dyer1987}]
\label{thm:dyer-increasing-paths}
    For any $u,v \in W$, we have that
    \[ \tR_{u,v}(q) = \sum_{\gamma\in \Gamma_\prec(u,v)} q^{\ell(\gamma)}. \]
\end{theorem}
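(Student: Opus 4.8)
The plan is to prove that the path generating function $D^\prec_{u,v}(q):=\sum_{\gamma\in\Gamma_\prec(u,v)}q^{\ell(\gamma)}$ satisfies the recurrence that uniquely determines $\tR_{u,v}$, and then conclude by uniqueness. Translating the displayed recurrence for $R$ into the $\tR$\nobreakdash-normalization, one finds that $\tR$ is characterized by $\tR_{u,u}=1$, $\tR_{u,v}=0$ when $u\not\le v$, and, for $u<v$ with $s\in S$ a left descent of $v$,
\[
\tR_{u,v}=\begin{cases}\tR_{su,sv}, & su<u,\\ q\,\tR_{u,sv}+\tR_{su,sv}, & su>u.\end{cases}
\]
For $D^\prec$ the boundary cases are immediate: there are no paths when $u\not\le v$, and the only path from $u$ to $u$ is the empty path, of length $0$. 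So the whole proof reduces to matching the recursive clause, which I would do by induction on $\ell(v)$.

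For the inductive step, fix a left descent $s$ of $v$; since the statement being proved quantifies over all reflection orders, I would first reduce to the convenient case that $s$ is $\prec$\nobreakdash-minimal, which is harmless once one knows $D^\prec$ is independent of the choice of reflection order. The engine is left translation $x\mapsto sx$. Using the lifting property of Bruhat order, one checks that a Bruhat edge $x\xrightarrow{t}y$ with $t\ne s$ translates to a Bruhat edge $sx\xrightarrow{sts}sy$ in the same direction, while an edge labeled $s$ forces $y=sx$ and translates to a single downward cover step; since each label occurs at most once along an increasing path, the $s$\nobreakdash-translate of an increasing path from $u$ to $v$ is, apart from that one possible downward step, an increasing path (with respect to the $s$\nobreakdash-conjugated order) running from $su$ to $sv$. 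When $s$ is $\prec$\nobreakdash-minimal an increasing path uses $s$ only as its first label, so the increasing paths from $u$ to $v$ split into: those avoiding $s$, which translate bijectively onto increasing paths from $su$ to $sv$ (contributing $\tR_{su,sv}$ by the inductive hypothesis, valid since $\ell(sv)<\ell(v)$); and those of the form $u\xrightarrow{s}su\rightsquigarrow v$, whose tails translate bijectively onto increasing paths from $u$ to $sv$ (contributing $q\,\tR_{u,sv}$), the latter family being nonempty precisely when $su>u$. This matches the recurrence.

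The two load-bearing points are both about reflection-order bookkeeping. First, independence of $D^\prec$ from $\prec$: I would obtain this from the fact that the reflection orders of $S_n$ are connected under local moves --- transposing two $\prec$\nobreakdash-consecutive commuting reflections, and reversing a $\prec$\nobreakdash-consecutive ``triangle'' triple $(a\,b)\prec(a\,c)\prec(b\,c)$ --- together with invariance of $D^\prec$ under each move by a short local count; alternatively, realize $D^\prec_{u,v}$ as a structure coefficient of $T_v^{-1}$ in the Hecke algebra, which is manifestly independent of the chosen reduced word and hence of the induced order. Second, and this is the step I expect to be the real obstacle, one must make sense of ``the $s$\nobreakdash-conjugated order'' on the smaller intervals: conjugation by $s$ does not in general send a reflection order to a reflection order (the trichotomy can fail on the triple containing $s$), so the argument needs a careful choice --- matching the conjugated order on the reflections that actually label the relevant subgraphs, all of which avoid $s$ in the families above, with a genuine reflection order to which the inductive hypothesis applies, or equivalently strengthening the inductive statement to accommodate the class of orders that arises. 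Pinning this down, and in particular confirming that the single $s$\nobreakdash-labeled edge is exactly the source of the extra factor $q$, is where the defining trichotomy of reflection orders is essential.
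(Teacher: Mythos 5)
The paper does not prove this statement at all --- it is quoted from Dyer \cite{Dyer1987} --- so there is no internal argument to compare against, and your proposal has to stand on its own. Its skeleton is the standard one (show the path count satisfies the $\tR$-recurrence and invoke uniqueness), and most ingredients are sound: left translation by $s$ preserves the direction of every edge labeled $t\neq s$ (since the positive root of $sts$ is $s\alpha_t$ when $t\neq s$); with $s$ $\prec$-minimal an increasing path can use $s$ only as its first edge; and order-independence of the count does follow from connectivity of reflection orders of $S_n$ under your two local moves (they correspond to commutation and braid moves on reduced words of $w_0$), each move being checked inside a coset of a rank-two reflection subgroup, where the induced graph is the rank-two Bruhat graph (the same fact the paper uses via Dyer's reflection-subgroup lemma). (Minor point: the recurrence $\tR_{u,v}=q\,\tR_{u,sv}+\tR_{su,sv}$ you use corresponds to the normalization $R_{u,v}(q)=q^{\ell(u,v)/2}\tR_{u,v}(q^{1/2}-q^{-1/2})$; the exponent printed in the paper is off by a factor of $2$.)

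The genuine gap is the step you yourself flag and leave open, together with a surjectivity issue you do not address, and both are settled by one concrete choice your sketch never makes. Starting from $\prec$ with $s$ minimal, define $\prec''$ by $t_1\prec'' t_2$ if and only if $st_1s\prec st_2s$ for $t_1,t_2\neq s$, and declare $s$ to be $\prec''$-\emph{maximal} (not minimal). This $\prec''$ is an honest reflection order: for an $A_2$-triple of reflections not containing $s$, conjugation by $s$ preserves which member is the ``long'' transposition, so the trichotomy is inherited from $\prec$; for a triple through $s$, minimality of $s$ in $\prec$ forces the order $s\prec(\text{long})\prec(\text{short})$, conjugation swaps the two non-$s$ members, and putting $s$ on top yields $(\text{short})\prec''(\text{long})\prec'' s$, again with the long transposition in the middle. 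Maximality of $s$ in $\prec''$ is also exactly what makes your two translation maps surjective --- which you need in order to obtain all of $\tR_{su,sv}$ and $q\,\tR_{u,sv}$, not just their ``$s$-avoiding parts'': since $sv<v$, a $\prec''$-increasing path ending at $sv$ could use the label $s$ only as its final step $v\to sv$, which is a downward step, so no $\prec''$-increasing path from $su$ (or from $u$) to $sv$ uses $s$ at all, and the translated families account for every such path. With that construction inserted, your induction on $\ell(v)$ closes; as written, however, the proposal stops short of it, so the central step is identified but not proved.
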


In particular, the sum $\sum_{\gamma\in \Gamma_\prec(u,v)} q^{\ell(\gamma)}$ is independent of the choice of reflection order $\prec$.

\begin{theorem}[Kazhdan--Lusztig \cite{Kazhdan-Lusztig-1}] \label{thm:KLexistence}
    There is a unique family of polynomials $(P_{u,v})_{u,v\in W}$ so that for any $u,v\in W$, the following hold:
    \begin{itemize}
        \item If $u\not\leq v$, then $P_{u,v}=0$.
        \item If $u=v$, then $P_{u,v}=1$.
        \item If $u<v$, then $\deg P_{u,v} < \frac{1}{2}\ell(u,v)$, and
        \[ q^{\ell(u,v)}P_{u,v}(q^{-1})-P_{u,v}(q) = \sum_{x\in (u,v]} R_{u,x}P_{x,v}. \]
    \end{itemize}
\end{theorem}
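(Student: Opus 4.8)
The plan is to induct on $\ell(u,v):=\ell(v)-\ell(u)$ (defined for $u\le v$). If $u\not\le v$ or $u=v$, the only requirements are the prescribed values $P_{u,v}=0$ and $P_{u,u}=1$ (the degree condition is imposed only for $u<v$), so assume $u<v$ and that a unique family $(P_{x,y})$ satisfying all three conditions has been constructed for every pair with $\ell(x,y)<\ell(u,v)$. Then the right-hand side
\[
C_{u,v}(q)\;:=\;\sum_{x\in(u,v]}R_{u,x}(q)\,P_{x,v}(q)
\]
is already a well-defined element of $\mathbb{Z}[q]$: each summand involves a $P_{x,v}$ with $\ell(x,v)<\ell(u,v)$ (read $P_{v,v}=1$ when $x=v$), and the $R$-polynomials lie in $\mathbb{Z}[q]$. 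The theorem for the pair $(u,v)$ thus reduces to showing that there is a \emph{unique} polynomial $P_{u,v}$ with $\deg P_{u,v}<\tfrac12\ell(u,v)$ and $q^{\ell(u,v)}P_{u,v}(q^{-1})-P_{u,v}(q)=C_{u,v}(q)$.

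The next step is the elementary observation that collapses both existence and uniqueness into a single antisymmetry property of $C_{u,v}$. If $\deg P<\ell/2$, then $P(q)$ is supported in degrees $<\ell/2$ while $q^{\ell}P(q^{-1})$ is supported in degrees $>\ell/2$, so the two contributions to $q^{\ell}P(q^{-1})-P(q)$ never interfere; in particular $P$ is minus the part of $q^{\ell}P(q^{-1})-P(q)$ of degree $<\ell/2$, which gives uniqueness at once. Conversely, a target $C$ lies in the image of $P\mapsto q^{\ell}P(q^{-1})-P(q)$ on $\{\deg P<\ell/2\}$ if and only if $q^{\ell}C(q^{-1})=-C(q)$: this antisymmetry forces the coefficient of $q^{\ell/2}$ to vanish when $\ell$ is even, and then $P(q):=-\sum_{k<\ell/2}\bigl([q^{k}]C(q)\bigr)q^{k}$ is the required preimage, with integer coefficients because $C$ has them. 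So everything comes down to proving
\[
q^{\ell(u,v)}\,C_{u,v}(q^{-1})\;=\;-\,C_{u,v}(q).
\]

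To establish this antisymmetry I would invoke two standard properties of $R$-polynomials, each obtained by a short induction on $\ell(u,v)$ from the recurrence (and also found in \cite{Bjorner-Brenti}): the symmetry $R_{u,x}(q^{-1})=(-1)^{\ell(u,x)}q^{-\ell(u,x)}R_{u,x}(q)$, and the orthogonality relation $\sum_{x\in[u,y]}(-1)^{\ell(u,x)}R_{u,x}(q)R_{x,y}(q)=\delta_{u,y}$. Applying the symmetry relation termwise and using $\ell(u,v)-\ell(u,x)=\ell(x,v)$ gives $q^{\ell(u,v)}C_{u,v}(q^{-1})=\sum_{x\in(u,v]}(-1)^{\ell(u,x)}R_{u,x}(q)\,q^{\ell(x,v)}P_{x,v}(q^{-1})$. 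By the inductive validity of the defining relation at each pair $(x,v)$ with $x\in(u,v]$ (trivial when $x=v$), $q^{\ell(x,v)}P_{x,v}(q^{-1})=P_{x,v}(q)+\sum_{y\in(x,v]}R_{x,y}(q)P_{y,v}(q)$. Substituting, splitting off the $P_{x,v}(q)$ terms, and interchanging the order of summation in the resulting double sum so that $y$ runs over $(u,v]$ and $x$ over the open interval $(u,y)$, the inner sum becomes $\sum_{x\in(u,y)}(-1)^{\ell(u,x)}R_{u,x}(q)R_{x,y}(q)$, which by orthogonality equals $0$ minus the boundary terms at $x=u$ and $x=y$, i.e. $-(1+(-1)^{\ell(u,y)})R_{u,y}(q)$. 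The $(-1)^{\ell(u,y)}R_{u,y}(q)$ piece exactly cancels the split-off single sum $\sum_{x\in(u,v]}(-1)^{\ell(u,x)}R_{u,x}P_{x,v}$, and the remaining $-R_{u,y}(q)$ piece reassembles $-C_{u,v}(q)$, which is the claimed identity.

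The only genuine difficulty is the bookkeeping in this last computation: keeping the Bruhat index sets straight (the half-open $(u,v]$ and $(x,v]$ versus the open $(u,y)$ obtained after swapping), correctly isolating the $x=u$ and $x=y$ terms when restricting the closed-interval orthogonality sum to its interior, and tracking the signs $(-1)^{\ell(u,x)}$ through the reindexing. A conceptually cleaner, but more infrastructure-heavy, alternative is to work in the Hecke algebra: the bar involution ($q\mapsto q^{-1}$, $T_w\mapsto T_{w^{-1}}^{-1}$) is a ring automorphism, its action on the standard basis is governed precisely by the $R_{x,w}$ (this is the source of the symmetry relation above), and the bar-invariant Kazhdan--Lusztig element $C'_v$, whose standard-basis expansion encodes the $P_{x,v}$, is produced by the general lemma on lifting elements so as to be fixed by a bar involution subject to a degree constraint; but setting this up requires more than the present exposition assumes.
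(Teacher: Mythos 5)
The paper does not prove this statement at all: it is quoted verbatim as a classical theorem of Kazhdan--Lusztig, with a citation in place of a proof. Judged on its own, your argument is correct and is essentially the standard one (it is how the existence/uniqueness theorem is proved in \cite{Kazhdan-Lusztig-1} and in \cite{Bjorner-Brenti}, Chapter 5): strong induction on $\ell(u,v)$, the observation that for $C=\sum_{x\in(u,v]}R_{u,x}P_{x,v}$ the map $P\mapsto q^{\ell}P(q^{-1})-P(q)$ is a bijection from $\{\deg P<\tfrac12\ell\}$ onto the polynomials satisfying $q^{\ell}C(q^{-1})=-C(q)$, and then the verification of that antisymmetry. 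Your bookkeeping in the last step checks out: after applying $R_{u,x}(q^{-1})=(-1)^{\ell(u,x)}q^{-\ell(u,x)}R_{u,x}(q)$, invoking the inductive identity at $(x,v)$, and swapping the summation order, the interior sum $\sum_{x\in(u,y)}(-1)^{\ell(u,x)}R_{u,x}R_{x,y}$ is indeed $-(1+(-1)^{\ell(u,y)})R_{u,y}$ by the orthogonality relation, and the cancellations you describe leave exactly $-C_{u,v}(q)$. The only external inputs are the $R$-polynomial symmetry and the orthogonality relation $\sum_{x\in[u,y]}(-1)^{\ell(u,x)}R_{u,x}R_{x,y}=\delta_{u,y}$; both are standard (the latter is most transparently the statement that the Hecke-algebra bar involution squares to the identity), and citing them as you do is legitimate, though the orthogonality is the one genuine piece of content you are importing rather than proving.
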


\subsection{Hypercube decompositions}
\label{sec:hcd-background}
From now on, we let $W=S_n$ be a symmetric group, with standard choice of Coxeter generators $s_i=(i \: i+1)$ for $i=1,\ldots,n-1$. The definitions in this section were introduced in \cite{Blundell} and \cite{elementary-paper}; see those references for more information.

\begin{definition}
    Let $\cH_n$ denote the \emph{hypercube graph}, which is the Hasse diagram of a Boolean lattice with $n$ elements, viewed as a directed graph. An \emph{$n$-hypercube} in $\Gamma(u,v)$ is a subgraph isomorphic to $\cH_n$. A $2$-hypercube in $\Gamma(u,v)$ is called a \emph{diamond}. A subset $I$ of $[u,v]$ is \emph{diamond-closed} if, whenever $I$ contains three vertices of a diamond $\cH$ in $\Gamma(u,v)$, then $I$ contains the fourth vertex of $\cH$.
    
    If $y_1,\ldots,y_n$ are the endpoints of the edges outgoing from the bottom vertex of an $n$-hypercube $\cH$ in $\Gamma(u,v)$ and terminating in $\cH$, then we say that $\cH$ is \emph{spanned} by $\{y_1,\ldots,y_n\}$.
\end{definition}
In general, there may be multiple hypercubes spanned by a given set $Y$.

\begin{definition}
\label{def:cluster}
    Let $I$ be a subset of $[u,v]$ and let $x\in I$. We define the sets
    \begin{align*}
        \cY_x &\coloneqq \{ y\in [u,v] \setminus I \mid x\to y\} \\ 
        \cA_x &\coloneqq \{ Y\subseteq \cY_x \mid \text{$Y$ is an antichain}  \}. 
    \end{align*}
    
    We say there is a \emph{hypercube cluster} at $x$ (relative to $I$) if, for every $Y\in \cA_x$, there is a unique hypercube in $\Gamma(u,v)$ spanned by $Y$ and with bottom vertex $x$. In this case, we define a function $\theta_x:\cA_x\to [u,v]$, the \emph{hypercube map}, which sends an antichain $Y$ to the top vertex of the unique hypercube spanned by $Y$.

    If there is a hypercube cluster at $x$, then we extend the domain of $\theta_x$ to include arbitrary subsets $Y \subset \cY_x$ by defining $\theta_x(Y)$ to be the top of the unique hypercube spanned by the maximal elements of $Y$.
\end{definition}

When there is a hypercube cluster at $x$, we note that $\theta_x$ satisfies $\theta_x(\varnothing) = x$ and $\theta_x(\{y\}) = y$ for any $y\in \cY_x$.

\begin{definition}[\cite{Blundell}]
\label{def:hcd}
    A \emph{hypercube decomposition} of $[u,v]$ is a subset $I$ of $[u,v]$ such that the following hold:
    \begin{itemize}
        \item[(HD1)] $I=[u,z]$ for some $z \in [u,v]$, and
        \item[(HD2)] $I$ is diamond-closed in $[u,v]$, and
        \item[(HD3)] For every $x\in I$, there is a hypercube cluster at $x$ relative to $I$.
    \end{itemize}
\end{definition}

\subsection{BBDVW polynomials}
\label{sec:bbdvw-polynomials}
Let $I$ be a hypercube decomposition of $[u,v]$. In this section we will introduce the polynomials $N_{u,v,I}$ and $Q_{u,v,I}$ used in \Cref{thm:BBDVWorig}. Note that our $N_{u,v,I}$ is $(q-1)$ times the polynomial $I_{u,v,I}$ from \cite{Blundell}, and our $Q_{u,v,I}$ is $(q-1)$ times the polynomial $Q_{u,v,I}$ from \cite{Blundell}.

\subsubsection{$N_{u,v,I}$}
We recall the definition of $N_{u,v,I}$ from \cite{Blundell}. In \Cref{sec:newN}, we will show $N_{u,v,I}$ simplifies greatly when expressed using $R$-polynomials. 
We will exclusively use the simplified version in the proofs, so 
the auxiliary definitions in this subsection may be skipped by the reader uninterested in the comparison between the two definitions of $N_{u,v,I}$.

Let $M_I$ denote the free $\ZZ[q]$-module with basis $\{\delta_x\mid x\in I\setminus\{u\}\}$. There is another basis for $M_I$, with elements $b_y$ for $y\in I\setminus\{u\}$, defined by
\[ b_y =  \sum_{x\in (u,y]} P_{x,y}\cdot \delta_x. \]
Define 
\[ r_{u,v,I} \coloneqq \sum_{x\in I\setminus \{u\}} P_{x,v}\cdot \delta_x \]
and expand this element of $M_I$ in the basis $\{b_y\}_{y\in I\setminus\{u\}}$ to get some family of coefficients $(\gamma_y)_{y\in I\setminus \{u\}}$:
\[ r_{u,v,I} = \sum_{y\in I\setminus\{u\}} \gamma_y\cdot b_y. \]
Using these coefficients, define
\[ N_{u,v,I} \coloneqq \sum_{y\in I\setminus \{u\}} \gamma_y (q^{\ell(u,y)}P_{u,y}(q^{-1})-P_{u,y}(q)). \]

\subsubsection{$Q_{u,v,I}$}
Recall that $I$ is a hypercube decomposition of $[u,v]$, so there is a hypercube map $\theta_u$ sending subsets of $\cY_u$ to elements of $[u,v]$. Define
\begin{align}
\label{eq:original-Q}
Q_{u,v,I} \coloneqq &-q^{\ell(u,v)} \sum_{\varnothing \neq Y\subseteq \cY_u} (q^{-1}-1)^{|Y|} P_{\theta_u(Y),v}(q^{-1}) \\ \nonumber
= &- \sum_{\varnothing \neq Y\subseteq \cY_u} q^{\ell(u,\theta_u(Y))}(q^{-1}-1)^{|Y|} \sum_{y\in [\theta_u(Y),v]}R_{\theta_u(Y),y}P_{y,v}.
\end{align}

\section{An alternating sum formula}
\label{sec:formula}

\subsection{New expressions for the BBDVW polynomials}
\label{sec:newN}
We now show how to rewrite $N_{u,v,I}$ in terms of $R$-polynomials. We note that, by definition, $(\gamma_y)_{y\in I\setminus\{u\}}$ is the unique family of polynomials so that, for all $x\in I\setminus \{u\}$,
\[ P_{x,v} = \sum_{y\in I} \gamma_y P_{x,y}. \]

Now we rewrite, using \Cref{thm:KLexistence}:
\begin{align*}
    N_{u,v,I} &= \sum_{y\in I\setminus \{u\}} \gamma_y(q^{\ell(u,y)}P_{u,y}(q^{-1})-P_{u,y}) \\ 
    &= \sum_{y\in I\setminus \{u\}} \gamma_y \sum_{x\in (u,y]} R_{u,x}P_{x,y} \\
    &= \sum_{x\in I\setminus \{u\}} R_{u,x} \sum_{y\in I\setminus \{u\}} \gamma_yP_{x,y} \\ 
    &= \sum_{x\in I\setminus \{u\}} R_{u,x} P_{x,v}.
\end{align*}

So we may alternatively take $N_{u,v,I}\coloneqq \sum_{x\in I\setminus\{u\}} R_{u,x}P_{x,v}$ as the definition of $N_{u,v,I}$. This definition makes sense when $I$ is any order ideal of $[u,v]$.

For $Y\in\cY_u$, let $\Lambda(Y)$ denote the order ideal of $\cY_u$ generated by $Y$ (i.e. the set of elements of $\cY_u$ weakly below some element of $Y$ in Bruhat order). By partitioning the summation in (\ref{eq:original-Q}) into sums over those subsets of $\cY_u$ having the same antichain of maximal elements (and hence the same image under $\theta_u$), we find an expression for $Q_{u,v,I}$ as a sum over antichains:
\begin{align*} Q_{u,v,I} &= \sum_{\varnothing \neq Y\in \cA_u} q^{\ell(u,v)-|\Lambda(Y)|} (-1)^{|Y|+1}(q-1)^{|Y|}P_{\theta_u(Y),v}(q^{-1})  \\ 
&= \sum_{\varnothing\neq Y\in \cA_u} q^{\ell(u,\theta_u(Y))-|\Lambda(Y)|}(-1)^{|Y|+1}(q-1)^{|Y|}\sum_{y\in [\theta_u(Y),v]} R_{\theta_u(Y),y}P_{y,v}.
\end{align*}

We now give an equivalent form of \Cref{conj:BBDVW} that will be the form we prove. As originally stated, \Cref{conj:BBDVW} asserts
 \[   q^{\ell(u,v)}P_{u,v}(q^{-1}) - P_{u,v} = N_{u,v,I} + Q_{u,v,I}. 
   \]
   Expanding both sides, this is
   \[  
    \sum_{x\in (u,v]} R_{u,x}P_{x,v}
    = \sum_{x\in I\setminus \{u\}} R_{u,x} P_{x,v} - \sum_{\varnothing \neq Y\subseteq \cY_u} q^{\ell(u,\theta_u(Y))}(q^{-1}-1)^{|Y|} \sum_{y\in [\theta_u(Y),v]}R_{\theta_u(Y),y}P_{y,v}.
\]
We can rewrite this equality as
\[ \sum_{Y\subseteq \cY_u} q^{\ell(u,\theta_u(Y))}(q^{-1}-1)^{|Y|}\sum_{y\in [u,v]\setminus I} R_{\theta_u(Y),y}P_{y,v} = 0. \]
In terms of antichains, this is
\[ \sum_{Y\in \cA_u} (-1)^{|Y|}q^{\ell(u,\theta_u(Y))-|\Lambda(Y)|}(q-1)^{|Y|}\sum_{y\in [u,v]\setminus I} R_{\theta_u(Y),y}P_{y,v} = 0. \tag{BBDVW}\label{eq:BBDVW} \]
We refer to (\ref{eq:BBDVW}) as the BBDVW formula; this is the equality we will prove in what follows.

\subsection{Strongness, the numerical criterion, and property (E)}
\label{sec:new-hypotheses}
We now define several additional properties that a hypercube decomposition may satisfy. 

\begin{definition}
\label{def:strong-nc-join-E}
Suppose that the interval $[u,v]$ has a hypercube cluster at $x$, relative to an order ideal $I$, with hypercube map $\theta_x: \cA_x \to [u,v]$. We say that this hypercube cluster:
\begin{enumerate}[(i)] 
    \item \label{item:def-strong} is \emph{strong} \cite{elementary-paper} if for any $Y_1,Y_2\in \cA_x$ with $|Y_1|=|Y_2|=|Y_1\cap Y_2|+1$, if there is a subgraph of $\Gamma(u,v)$ of the form
\[\begin{tikzcd}[cramped,sep=small] & w & \\ \theta_x(Y_1)\ar[ur]& &\theta_x(Y_2)\ar[ul] \\ &\theta_x(Y_1\cap Y_2)\ar[ul]\ar[ur]& \end{tikzcd},\]
then $Y_1\cup Y_2$ is an antichain and $\theta_x(Y_1\cup Y_2)=w$; 

    \item \label{item:def-nc} satisfies the \emph{numerical criterion} if for all $Y \in \cA_x$ we have
    \begin{equation}
    \label{eq:numerical-criterion}
        \frac{\ell(x,\theta_x(Y))+|Y|}{2} = |\Lambda(Y)|,
    \end{equation}
    where $\Lambda(Y)$ denotes the order ideal of the poset $(\cY_x,\leq)$ generated by $Y$; and

    \item \label{item:def-E} satisfies \emph{property (E)} \cite{elementary-paper} if there exists a reflection order $\prec$ so that if $x_1\xrightarrow{t_1} x_2$ and $x_3\xrightarrow{t_2} y$, with $x_1,x_2,x_3\in I\cap[x,v]$ and $y\in [x,v]\setminus I$, then $t_1\prec t_2$.
\end{enumerate}
We say that a hypercube decomposition $I$ of $[u,v]$ has one of these properties if the hypercube cluster at $x$ does, for all $x \in I$. 
\end{definition}

\subsection{The relative $\tR$-polynomial}
\label{sec:relative-R}
Let $I$ be a nonempty order ideal of $[u,v]$.  
\begin{definition}
\label{def:relative-tR}
    The \emph{relative $\tR$-polynomial} of $[u,v]$ with respect to $I$ is
    \[ \tR_{u,v,I} \coloneqq \sum_{x\in I} \tR_{u,x}(-q)\tR_{x,v}(q). \]
When $x,y\in [u,v]$, we also write $\tR_{x,y,I}$ for $\tR_{x,y,I\cap [x,y]}$.
\end{definition}

For a path $\gamma: x_0 \to \cdots \to x_k$ in $\Gamma(u,v)$, we let $\prev_{\gamma}(x_i)=x_{i-1}$ for $i=1,\ldots,k$ and $\nxt_\gamma(x_i)=x_{i+1}$ for $i=0,\ldots,k-1$. For an order ideal $I \subset [u,v]$, we write $\gamma|_{I}$ for the restriction of $\gamma$ to $I$. 

\begin{proposition}
\label{prop:relative-tR-gen-func}
If $I= [u,v]$, then we have 
\begin{equation}
\label{eq:rel-R-whole-interval}
   \tR_{u,v,I} = \begin{cases}
    1 &\text{if $u=v$}\\
    0 &\text{otherwise.}
\end{cases} 
\end{equation}

Otherwise,
let $I$ be a nonempty proper order ideal in $[u,v]$ and let $\prec$ be any reflection order. Then
\[
\tR_{u,v,I}=\sum_{\gamma \in \Gamma^I_{\prec}(u,v)} (-1)^{\ell(\gamma|_I)}q^{\ell(\gamma)}, 
\]
where $\Gamma^I_{\prec}(u,v)$ is the set of paths from $u$ to $v$ which are $\prec$-decreasing from $u$ to $\nxt(x)$ and $\prec$-increasing from $x$ to $v$, where $x$ is the last vertex on $\gamma|_I$.
\end{proposition}
\begin{proof}
By \Cref{thm:dyer-increasing-paths} and \Cref{def:relative-tR},
\[
\tR_{u,v,I}=\sum_{(\gamma_1,\gamma_2)} (-1)^{\ell(\gamma_1)}q^{\ell(\gamma_1)+\ell(\gamma_2)},
\]
where the sum ranges over pairs $(\gamma_1,\gamma_2)$, with $\gamma_1$ a decreasing path from $u$ to some point $x\in I$ and $\gamma_2$ an increasing path from $x$ to $v$. We can concatenate the pair $(\gamma_1,\gamma_2)$ to form a single path $\gamma$ from $u$ to $v$ having a marked point $x\in I$. There is a sign-reversing partial involution $\iota$ on the terms of $\tR_{u,v,I}$ which replaces the marked point $x$ with $\prev_{\gamma}(x)$ (resp. $\nxt_{\gamma}(x)$) if $\gamma$ is increasing (resp. decreasing) at $x$. For the purpose of this involution (and the others in this section), we declare every path to be decreasing at $u$ and increasing at $v$. The involution $\iota$ is well defined unless $x$ is the last vertex on $\gamma|_I$ and either $x=u=v$ or the path is decreasing at $x$. Hence by canceling terms paired by $\iota$, we see that $\tR_{u,v,I}$ counts paths $\gamma$ from $u$ to $v$ so that if $x$ is the last vertex on $\gamma|_I$, then the path is decreasing from $u$ to $\nxt(x)$ and increasing from $x$ to $v$, each with weight $(-1)^{\ell(\gamma|_I)}q^{\ell(\gamma)}$. 
\end{proof}

\begin{remark}
    Our relative $\tR$-polynomial is a generalization of that of Gurevich and Wang by \cite[Proposition 3.12]{Gurevich-Wang}. They give an interpretation of their relative $\tR$-polynomial in terms of Hecke algebras; it would be interesting to see if there is a similar description in our context. 
\end{remark}

\begin{figure}
    \centering
    \hfill
    \begin{tikzpicture}[baseline={([yshift=-.5ex]current bounding box.center)}]
        \fill[rounded corners, blue,opacity = .3] (0,-.5) -- (.5,0) --  (-1,1.5) -- (-1.5,1) -- cycle;
        \fill[red!20] (-1,1) circle (10pt); 
        \node (123) at (0,0) {$123$};
        \node (213) at (-1,1) {$x=213\phantom{=w}$};
        \node (132) at (1,1) {$132$};
        \node (312) at (1,2) {$312$};
        \node (231) at (-1,2) {$231$};
        \node (321) at (0,3) {$321$};

        \draw (123) -- (213) -- (312) -- (321) -- (231) -- (132) -- (123);
        \draw (213) -- (231);
        \draw (132) -- (312);
        \draw (123) -- (321);
        \draw[thick, red,->] (123) edge (213) (213) edge (231) (231) edge (321);
    \end{tikzpicture}
    \hfill $\xLeftrightarrow{\iota}$ \hfill
    \begin{tikzpicture}[baseline={([yshift=-.5ex]current bounding box.center)}]
        \fill[rounded corners, blue,opacity = .3] (0,-.5) -- (.5,0) --  (-1,1.5) -- (-1.5,1) -- cycle;
        \fill[red!20] (0,0) circle (10pt); 
        \node (123) at (0,0) {$\prev(x)=123\phantom{=\prev(w)}$};
        \node (213) at (-1,1) {$213$};
        \node (132) at (1,1) {$132$};
        \node (312) at (1,2) {$312$};
        \node (231) at (-1,2) {$231$};
        \node (321) at (0,3) {$321$};

        \draw (123) -- (213) -- (312) -- (321) -- (231) -- (132) -- (123);
        \draw (213) -- (231);
        \draw (132) -- (312);
        \draw (123) -- (321);
        \draw[thick, red,->] (123) edge (213) (213) edge (231) (231) edge (321);
    \end{tikzpicture}
    \hfill
    \phantom{!}
    
    \caption{An example of the involution $\iota$ from the proof of \Cref{prop:relative-tR-gen-func}. The order ideal $I$ is shown in blue. The reflection order is $(1\:2)\prec(1\:3)\prec(2\:3)$. The path $\gamma$ is shown with red arrows, and the marked point is highlighted in pink. In this example, $\gamma$ is increasing at $213$ and decreasing at $123$, so $\iota$ swaps the marked point between those two vertices.}
    \label{fig:involution}
\end{figure}

We show in \Cref{lem:R-from-relR,lem:Feq0relR} that the relative $\tR$-polynomial satisfies two identities related to \Cref{conj:Feq0,conj:ReqH}. The first of these is a generalization of \cite[Proposition 3.12]{Gurevich-Wang}. 

\begin{lemma}\label{lem:R-from-relR}
    Let $I$ be a nonempty order ideal of $[u,v]$. Then
    \[ \tR_{u,v} = \sum_{x\in I} \tR_{u,x}\tR_{x,v,I}. \]
\end{lemma}
\begin{proof}
    The right hand side counts paths $\gamma$ from $u$ to $v$ with marked points $x_1,x_2\in I$ so that $x_1\leq x_2$ are on the path $\gamma$, $x_2$ is the last point of $\gamma$ in $I$, and $\gamma$ is increasing from $u$ to $x_1$, decreasing from $x_1$ to $\nxt(x_2)$, and increasing from $x_2$ to $v$. The weight of $\gamma$ is $(-1)^{\ell(\gamma_1)}q^{\ell(\gamma)}$, where $\gamma_1$ is the subpath from $x_1$ to $x_2$. There is a sign-reversing involution which replaces the marked point $x_1$ with either $\prev(x_1)$ or $\nxt(x_1)$ depending on whether $\gamma$ is decreasing or increasing at $x_1$, respectively. This is well-defined unless $\gamma$ is increasing at $x_1$ and $x_1=x_2$. Hence the terms left uncancelled by the involution are simply the increasing paths from $u$ to $v$, with the marked points $x_1$ and $x_2$ both coinciding with the last point of $\gamma$ in $I$. These paths are counted with weight $q^{\ell(\gamma)}$, so the remaining terms sum to exactly $\tR_{u,v}$.
\end{proof}

\begin{lemma}\label{lem:Feq0relR}
    Let $I$ be a nonempty order ideal of $[u,v]$. Then
    \[ \sum_{y\in [u,v]} \tR_{u,y,I}(-q) \tR_{y,v}(q) = 0.  \]
\end{lemma}
\begin{proof}
    First we note that $\sum_{y\in I} \tR_{u,y,I}(-q)\tR_{y,v}(q) = \tR_{u,v}(q)$ by (\ref{eq:rel-R-whole-interval}). Hence it is enough to show that
    \[ \sum_{y\in [u,v]\setminus I} \tR_{u,y,I}(-q)\tR_{y,v}(q) =  -\tR_{u,v}(q). \]
    The left hand side of this counts paths $\gamma$ from $u$ to $v$ with marked points $x,y$ so that $x$ is the last point of $\gamma$ in $I$, $y$ is in $[u,v]\setminus I$, and $\gamma$ is decreasing from $u$ to $\nxt(x)$, increasing from $x$ to $y$, and decreasing from $y$ to $v$. The weight of such a path is $(-1)^{\ell(\gamma_2)}q^{\ell(\gamma)}$, where $\gamma_2$ is the subpath from $x$ to $y$. There is a sign-reversing involution which replaces the marked point $y$ with either $\prev(y)$ or $\nxt(y)$, depending on whether $\gamma$ is decreasing at $y$ or increasing at $y$, respectively. This is well-defined unless $\gamma$ is decreasing at $y$ and $y=\nxt(x)$. Hence the remaining terms count decreasing paths $\gamma$ with weight $-q^{\ell(\gamma)}$, so sum to $-\tR_{u,v}$.
\end{proof}

As a consequence of either of these lemmas, the $\tR$-polynomial of an interval can be computed in terms of the $\tR$-polynomials of strictly smaller intervals and the relative $\tR$-polynomials with respect to any proper non-empty order ideal. In particular, if every interval has a combinatorially defined order ideal for which the relative $\tR$-polynomial can be computed combinatorially, then combinatorial invariance holds. 

We now restate and prove \Cref{thm:intro-relative-R}.

\begin{theorem}\label{thm:EintervalrelR}
    Let $I$ be a diamond-closed order ideal of $[u,v]$. Assume there is a hypercube cluster at $x\in I$ which is strong and satisfies property (E). Then
    \[ \tR_{x,v,I} = \sum_{\substack{Y\in\cA_x \\ \theta_x(Y)=v}}q^{|Y|}. \]
\end{theorem}
\begin{proof}
    Recall that $\tR_{x,v,I}$ is a sum over paths $\gamma$ from $x$ to $v$ which are decreasing at every point in $I$ and increasing at every point in $[u,v]\setminus I$. Since the hypercube cluster at $x$ satisfies property (E), there is a reflection order $\prec$ so that the edge labels of $I$ in $[x,v]$ are smaller than the edge labels of edges leaving $I$ in $[x,v]$. Using this reflection order, if $x'$ is the last vertex of $\gamma$ in $I$ and $x'\neq x$, then $\gamma$ is increasing at $x'$. Thus no paths counted by $\tR_{x,v,I}$ use any vertex of $I$ other than $x$. In other words, with such a reflection order, $\tR_{x,v,I}$ counts increasing paths from $x$ to $v$ so that $x$ is the only element from $I$ on the path. Now \cite[Lemmas 3.12 and 3.15]{elementary-paper} together show that such paths are in bijection with elements $Y\in\cA_x$ so that $\theta_x(Y)=v$. Furthermore, the length of the path coincides with $|Y|$. Hence $\tR_{x,v,I}=\sum_{\substack{Y\in\cA_x\\ \theta_x(Y)=v}}q^{|Y|}$.
\end{proof}

\subsection{From relative $\tR$-polynomials to the BBDVW formula}
\label{sec:relative-R-to-BBDVW}
\begin{lemma}
\label{lem:bbdvw-inner-sum-vanishes}
    Assume that $I$ is a diamond-closed order ideal of $[u,v]$ with a hypercube cluster at $u$ that is strong and has property (E). Then for any $y\in [u,v]\setminus I$,
    \begin{equation}\label{eq:Feq0}
    \sum_{Y\in \cA_u} (-1)^{|Y|}q^{|Y|}\tR_{\theta_u(Y),y} = 0. \end{equation}
    If, furthermore, the hypercube cluster satisfies the numerical criterion, then for any $y\in [u,v]\setminus I$,
    \begin{equation}\label{eq:Funtildeeq0} 
    \sum_{Y\in \cA_u} (-1)^{|Y|}q^{\ell(u,\theta_u(Y))-|\Lambda(Y)|}(q-1)^{|Y|} R_{\theta_u(Y),y} = 0.\end{equation}
\end{lemma}
\begin{proof}
    \Cref{lem:Feq0relR} shows that
    \[ \sum_{y\in [u,v]} \tR_{u,y,I}(-q)\tR_{y,v}(q)=0. \]
    Hence combining this with \Cref{thm:EintervalrelR} proves (\ref{eq:Feq0}).
    By substituting $q^{1/2}-q^{-1/2}$ in the left hand side of (\ref{eq:Feq0}) and multiplying by $q^{\ell(u,y)/2}$, we find that
    \[ \sum_{Y\in \cA_u} (-1)^{|Y|}q^{(\ell(u,\theta_u(Y))-|Y|)/2}(q-1)^{|Y|} R_{\theta_u(Y),y} = 0. \]
    The numerical criterion implies that $q^{(\ell(u,\theta_u(Y))-|Y|)/2} = q^{\ell(u,\theta_u(Y))-|\Lambda(Y)|}$, which proves (\ref{eq:Funtildeeq0}).
\end{proof}

\begin{corollary}
\label{cor:strong-nc-E-implies-bbdvw}
    If $I$ is a diamond-closed order ideal of $[u,v]$ with a hypercube cluster at $u$ which is strong, satisfies the numerical criterion, and has property (E), then (\ref{eq:BBDVW}) holds for $I$.
\end{corollary}
\begin{proof}
Exchanging the order of summation in (\ref{eq:BBDVW}) the assertion becomes:
\[ \sum_{y\in [u,v]\setminus I} \left(\sum_{Y\in \cA_u} (-1)^{|Y|}q^{\ell(u,\theta_u(Y))-|\Lambda(Y)|}(q-1)^{|Y|} R_{\theta_u(Y),y}\right)P_{y,v} = 0. \]
By \Cref{lem:bbdvw-inner-sum-vanishes}, the inner sum vanishes for all $y \in [u,v] \setminus I$.
\end{proof}

Hence to prove \Cref{thm:BBDVWorig}, it is enough to prove \Cref{thm:intro-strong-and-nc}, so that \Cref{cor:strong-nc-E-implies-bbdvw} can be applied to any hypercube decomposition of a lower interval.

\subsection{Property (E)}
\label{sec:property-E}
The remainder of the paper is dedicated to establishing \Cref{def:strong-nc-join-E}(i-iii) for hypercube decompositions of lower intervals. There is in fact a larger class of intervals already known to satisfy property (E), introduced in \cite{elementary-paper}.

\begin{definition}
\label{def:simple}
    The interval $[u,v]$ is \emph{simple} if the roots $e_i-e_j$ corresponding to the cover relations $u\lessdot (i\,j)u \leq v$ are linearly independent. 
\end{definition}

The following was shown in the proof of \cite[Corollary 4.6]{elementary-paper}.
\begin{proposition} 
\label{prop:simpleisE}
    If $[u,v]$ is simple and $I$ is a principal diamond-closed order ideal, then $I$ satisfies property (E). 
\end{proposition}

Any lower interval is simple. Hence we deduce from \Cref{prop:simpleisE}:

\begin{corollary}\label{cor:lowerisE}
    If $[e,v]$ is a lower interval and $I$ is a hypercube decomposition of $[e,v]$, then $I$ satisfies property (E).
\end{corollary}

\Cref{thm:EintervalrelR} and \Cref{prop:simpleisE} imply another corollary of independent interest, which shows that relative $\tR$-polynomials of simple intervals can be computed combinatorially. (Note, though, that simple intervals cannot be recognized combinatorially.) 

\begin{corollary}
    If $[u,v]$ is a simple interval and $I$ is a principal diamond-closed order ideal with a strong hypercube cluster at $x\in I$, then 
    \[ \tR_{x,v,I} = \sum_{\substack{Y\in \cA_x \\ \theta_x(Y)=v}} q^{|Y|}. \]
\end{corollary}

\Cref{prop:simpleisE} and \Cref{lem:bbdvw-inner-sum-vanishes} together imply a strengthening of \Cref{thm:introsimple}.

\begin{corollary}
    If $[u,v]$ is a simple interval, and $I$ is a principal diamond-closed order ideal of $[u,v]$ with a strong hypercube cluster at $u$, then
    \[ \sum_{Y\in \cA_u}(-q)^{|Y|}\tR_{\theta_u(Y),v} = 0. \]
\end{corollary}

\section{Hypercube decompositions of lower intervals}
\label{sec:lower}

We now prove \Cref{thm:intro-strong-and-nc}, which we restate here.

\begin{theorem}
\label{thm:hcds-of-lower-intervals-are-nice}
    Let $v \in S_n$ and let $I=[e,z]$ be any hypercube decomposition of $[e,v]$. Then $I$ has property (E) and the hypercube cluster at $e$, relative to $I$:
    \begin{enumerate}[(i)]
        \item \label{item:strong} is strong, and
        \item \label{item:nc} satisfies the numerical criterion.
    \end{enumerate}
\end{theorem}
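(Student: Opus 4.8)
My plan is to exploit the fact that for a lower interval $[e,v]$, the vertex $x = e$ and the set $\cY_e$ have an extremely explicit combinatorial description. Since edges out of $e$ in $\Gamma$ go to reflections (transpositions) $t$ with $t \le v$, and since $I = [e,z]$ is an order ideal, the set $\cY_e$ consists precisely of those transpositions $t \le v$ with $t \not\le z$. The first step is to establish that $\cY_e$, as an induced subposet of Bruhat order, embeds in a way controlled by a reflection order: I would fix a reflection order $\prec$ and observe that the hypercube spanned by an antichain $Y = \{t_1 \prec \cdots \prec t_k\}$ is built by composing the $t_i$ in increasing order, so that $\theta_e(Y) = t_k \cdots t_1$ and $\ell(\theta_e(Y)) = k$ whenever this product has full length. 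The existence of the hypercube cluster (HD3) forces exactly this length condition for every antichain $Y$, which is the crucial structural input.

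Given this, part \eqref{item:nc}, the numerical criterion, should follow by a direct computation: with $\ell(e,\theta_e(Y)) = \ell(\theta_e(Y)) = |Y|$ for every antichain $Y$ (the hypercube being honestly a subgraph of the Bruhat graph of the right dimension), the right-hand side of \eqref{eq:numerical-criterion} reads $(|Y| + |Y|)/2 = |Y|$, so I must show $|\Lambda(Y)| = |Y|$, i.e. that the order ideal generated by an antichain $Y$ in $\cY_e$ is $Y$ itself. This amounts to showing $\cY_e$ is an antichain in Bruhat order — equivalently, that no transposition below $v$ but not below $z$ lies below another such transposition. This I would argue from the structure of transpositions in $S_n$: if $t' < t$ in Bruhat order for transpositions $t = (a\,c)$, $t' = (a'\,c')$, then necessarily (by the subword characterization or by rank considerations, since both have Coxeter length relating to inversions) one shows such a containment cannot happen among the minimal-length reflections below $v$, or else it is incompatible with $I$ being diamond-closed. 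Making this last point airtight is likely the technical heart of part (ii).

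For part \eqref{item:strong}, I would argue as follows. Suppose $Y_1, Y_2 \in \cA_e$ with $|Y_1| = |Y_2| = |Y_1 \cap Y_2| + 1$, so $Y_1 = Z \cup \{s\}$, $Y_2 = Z \cup \{t\}$ for an antichain $Z$ and distinct $s,t \in \cY_e \setminus Z$, and suppose the diamond configuration in the statement exists with apex $w$. Using the product description, $\theta_e(Y_1) = $ (ordered product over $Z \cup \{s\}$) and similarly for $Y_2$; both cover $\theta_e(Z)$, and $w$ covers both. Since $\cY_e$ is an antichain, $Y_1 \cup Y_2 = Z \cup \{s,t\}$ is automatically an antichain, so the only content is $\theta_e(Y_1 \cup Y_2) = w$. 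Here I would use that $w$ has length $|Z| + 2 = |Y_1 \cup Y_2|$ (it covers $\theta_e(Y_1)$ which has length $|Z|+1$), and that the hypercube cluster hypothesis guarantees a unique hypercube spanned by $Y_1 \cup Y_2$ with bottom $e$; its top $\theta_e(Y_1 \cup Y_2)$ is obtained from $\theta_e(Z)$ by multiplying by $s$ and $t$ in some $\prec$-order, and one checks this product equals $w$ by comparing the two length-two chains $\theta_e(Z) \to \theta_e(Y_1) \to w$ and $\theta_e(Z) \to \theta_e(Y_1) \to \theta_e(Y_1 \cup Y_2)$ inside the diamond of $\Gamma$ above $\theta_e(Z)$, invoking the diamond-closure of $I$ only if needed to rule out $w \in I$. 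The main obstacle throughout is controlling how hypercubes spanned by larger antichains interact with the ambient Bruhat graph — specifically, pinning down $\theta_e$ explicitly enough that the diamond in the strong condition is forced to be the top face of the hypercube on $Y_1 \cup Y_2$ — and I expect this to require a careful lemma, probably already developed in \cite{elementary-paper}, relating hypercube tops to $\prec$-ordered products of reflections.
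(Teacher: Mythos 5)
Your proposal rests on a misconception about the Bruhat graph that invalidates both parts. The edges out of $e$ in $\Gamma$ go to \emph{all} reflections $t \leq v$, not just the atoms of $[e,v]$, and an $n$-hypercube is a subgraph of the Bruhat graph, not of the Hasse diagram; its edges may jump length by more than one. Consequently $\ell(\theta_e(Y)) \neq |Y|$ in general: already for $Y = \{(1\:3)\}$ one has $\theta_e(Y) = (1\:3)$ of length $3$, not $1$, and nothing in (HD3) "forces" a length-$|Y|$ condition. Your reduction of the numerical criterion to $(|Y|+|Y|)/2 = |Y| = |\Lambda(Y)|$ therefore collapses, and the statement you would then need --- that $\cY_e$ is an antichain --- is simply false: in the interval $[e,361245]$ with $I=[e,214356]$, both $(2\:3)$ and $(1\:3)$ lie in $\cY_e$ and $(2\:3) < (1\:3)$. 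The numerical criterion is genuinely nontrivial even for singletons: for $Y=\{(i\:j)\}$ it asserts $|\Lambda(Y)| = j-i$, which requires knowing exactly which reflections below $(i\:j)$ lie outside $I$. That, in turn, requires identifying the global structure of $I$ --- it is $[e,v]\cap W_J$ for a standard parabolic $W_J$, giving a block decomposition of $[n]$ --- and then analyzing how antichains of $\cY_e$ look relative to the blocks (the non-nesting, consecutive-block, and endpoint constraints, and the resulting length formula $\ell(\theta_e(Y_a)) = 2(\max - \min) - |Y_a|$ per connected component, corrected by crossings between components). None of this appears in your sketch, and it is where the actual content of part (ii) lies.

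Part (i) inherits the same gap. You assert that $Y_1 \cup Y_2$ is "automatically an antichain since $\cY_e$ is an antichain," but since $\cY_e$ is not an antichain, ruling out $y < y'$ is precisely the hard step: one must show that if $y<y'$ the configuration forces the two middle vertices of the diamond to be comparable, and then that a diamond in $\Gamma$ with comparable middle vertices can only be spanned by commuting reflections, contradicting the fact that the reflections labeling the diamond are conjugates of $y,y'$. Your claim that $w$ has length $|Z|+2$ fails for the same edges-jump-length reason. Moreover, even granting the antichain property, identifying $w$ with $\theta_e(Y_1\cup Y_2)$ needs a lemma guaranteeing that the (suitably ordered, not reflection-order-increasing) product of the elements of an antichain is always the top of \emph{some} hypercube spanned by it in $\Gamma$, together with the uniqueness in (HD3); your appeal to an unspecified "$\prec$-ordered product" lemma from the literature does not supply this, since $\theta_e(Y)$ is not an increasing product of $Y$ in a reflection order but an honest group-theoretic product whose length-additivity (up to crossing cancellations) must be proved from the block structure.
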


\subsection{Blocks of hypercube decompositions}
\label{sec:blocks}
In order to prove \Cref{thm:hcds-of-lower-intervals-are-nice}, we will study the structure of $\cA_e$, relative to a hypercube decomposition $I$ of $[e,v]$. We will see in this section that $I$ induces a partition of $[n]$ into \emph{blocks} and see in \Cref{sec:camels} that the structure of $\cA_e$ is controlled by certain combinatorial diagrams, decorating the blocks, that we call \emph{caravans}. Our starting point is the following fact:

\begin{proposition}
\label{prop:dc-order-ideal-of-lower-is-parabolic}
Let $v \in W=S_n$ and let $I$ be any diamond-closed order ideal in $[e,v]$. Then $I=[e,v] \cap W_J$ for some parabolic subgroup $W_J \subset W$. Moreover, $I$ is a principal order ideal.
\end{proposition}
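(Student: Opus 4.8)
The plan is to identify the parabolic explicitly. Set $J := \{s \in S : s \in I\}$, and prove that $I = W_J \cap [e,v]$ (assuming, as in the intended application, that $I$ is nonempty, so that the minimum $e$ of $[e,v]$ lies in $I$). One inclusion is immediate and uses only that $I$ is an order ideal: if $x \in I$ and $s \in \supp(x)$, then $s \le x$, so $s \in I$, hence $s \in J$; thus $\supp(x) \subseteq J$, i.e.\ $x \in W_J$, and of course $x \in [e,v]$. So $I \subseteq W_J \cap [e,v]$.

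For the reverse inclusion I would show that every $w \in W_J \cap [e,v]$ lies in $I$, by induction on $\ell(w)$. The cases $\ell(w) \le 1$ are handled by $e \in I$ and by the definition of $J$. For $\ell(w) \ge 2$, fix a reduced word $w = s_{i_1}\cdots s_{i_k}$ and put
\[
p_1 := s_{i_2}\cdots s_{i_k},\qquad p_2 := s_{i_1}\cdots s_{i_{k-1}},\qquad z := s_{i_2}\cdots s_{i_{k-1}}.
\]
Deleting the first (resp.\ last) letter of a reduced word leaves a reduced word, so $z \lessdot p_1 \lessdot w$ and $z \lessdot p_2 \lessdot w$ are cover relations in Bruhat order, and $p_1 \ne p_2$ (otherwise $w = z$, contradicting $\ell(w) = \ell(z)+2$). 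Hence $\{z,p_1,p_2,w\}$ equipped with these four cover edges is a $2$-hypercube in $\Gamma(e,v)$ with top vertex $w$. Moreover $z,p_1,p_2$ have support contained in $\supp(w) \subseteq J$ and satisfy $z,p_1,p_2 \le w \le v$ with strictly smaller length, so they lie in $W_J \cap [e,v]$ and, by the inductive hypothesis, in $I$. Since $I$ is diamond-closed and contains three of the four vertices of this diamond, it contains $w$. This proves $W_J \cap [e,v] \subseteq I$, so $I = W_J \cap [e,v]$.

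Finally, the ``moreover'' reduces to a standard fact about parabolic cosets: for any $w \in W$ and $J \subseteq S$, the set $W_J \cap [e,w]$ has a unique maximal element $\hat v$ (the parabolic projection of $w$ into $W_J$; see, e.g., \cite{Bjorner-Brenti}), so that $I = W_J \cap [e,v] = [e,\hat v]$ is principal. The bulk of the argument — and the step I expect to be the only genuinely non-formal one — is the reverse inclusion; the crux is to exhibit, for a given $w$, a diamond below $w$ whose three lower vertices still lie in $W_J \cap [e,v]$, and the ``first-and-last-letter'' choice above does this cleanly. The remaining points (that the quadruple is an honest copy of $\cH_2$, which it is since the four cover edges already give one, and that the induction is well founded, which it is since the three lower vertices are strictly shorter) are routine.
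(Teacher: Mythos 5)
Your argument for the main claim is correct, but it takes a genuinely different route from the paper. The paper's proof is short because it outsources the work: lower intervals are simple, so \cite[Prop.~4.4]{elementary-paper} gives $I=[e,v]\cap W'$ with $W'$ the reflection subgroup generated by the atoms of $I$; these atoms are simple reflections, so $W'=W_J$, and principality is then quoted from \cite[Thm.~2.2]{billey-fan-losonczy}. You instead prove the identification $I=W_J\cap[e,v]$ from scratch: the inclusion $I\subseteq W_J\cap[e,v]$ uses only that $I$ is an order ideal, and for the reverse inclusion you induct on length, producing for each $w\in W_J\cap[e,v]$ with $\ell(w)\geq 2$ the diamond on $\{z,p_1,p_2,w\}$ obtained by deleting the first and/or last letter of a reduced word. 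This checks out: the four cover relations are Bruhat-graph edges between elements of $[e,v]$, the vertices are distinct (your observation that $p_1=p_2$ would force $w=z$ is right, since $z=s_{i_1}p_2$), and $z,p_1,p_2$ lie in $W_J\cap[e,v]$ with strictly smaller length, so diamond-closure plus induction gives $w\in I$. What this buys is a self-contained, elementary proof of the first assertion that avoids the machinery of \cite{elementary-paper}; what it gives up is generality, since the cited proposition covers all simple intervals, not just lower ones.

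One caveat concerns the ``moreover.'' The unique maximal element of $W_J\cap[e,v]$ is \emph{not} the parabolic projection of $v$ in the usual sense of the coset factor $v_J$ in $v=v^Jv_J$: for $v=s_1s_2$ and $J=\{s_1\}$ the projection is $e$, while $\max\bigl(W_J\cap[e,v]\bigr)=s_1$. The existence of a unique maximal element of $W_J\cap[e,v]$ is a nontrivial theorem -- precisely the result the paper quotes from \cite{billey-fan-losonczy} -- and I do not believe it appears in \cite{Bjorner-Brenti} in the form you invoke. So replace the parenthetical justification with a citation of that theorem; with that fixed, your proof is complete.
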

\begin{proof}
Lower intervals are in particular \emph{simple} intervals (see \Cref{def:simple}). Thus, by \cite[Prop.~4.4]{elementary-paper}, $I=[e,v] \cap W'$ where $W'$ is the reflection subgroup of $W$ generated by the set $J$ of atoms of $I$. These atoms are all simple reflections, and so $W'=W_J$ is in fact a standard parabolic subgroup. Then \cite[Thm.~2.2]{billey-fan-losonczy} implies that $[e,v] \cap W_J$ has a unique maximal element under Bruhat order, so $I$ is principal.
\end{proof}

In particular, we have:

\begin{corollary}
\label{cor:hcd-of-lower-is-parabolic}
Let $v \in W=S_n$. Then any hypercube decomposition of $[e,v]$ is of the form $I=[e,v] \cap W_J$, where $J$ is the set of atoms of $I$.
\end{corollary}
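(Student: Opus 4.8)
The plan is to recognize the corollary as an immediate specialization of \Cref{prop:dc-order-ideal-of-lower-is-parabolic}. First I would verify the hypotheses of that proposition. By (HD1), a hypercube decomposition $I$ of $[e,v]$ equals $[e,z]$ for some $z\in[e,v]$, hence is an order ideal of $[e,v]$; by (HD2), it is diamond-closed in $[e,v]$. (Property (HD3) is not needed here.) Therefore \Cref{prop:dc-order-ideal-of-lower-is-parabolic} applies and yields $I=[e,v]\cap W_J$ for some parabolic subgroup $W_J\subseteq W$, with $I$ principal.

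It then remains to identify $W_J$ with the standard parabolic generated by the atoms of $I$. For this I would simply point back to the proof of \Cref{prop:dc-order-ideal-of-lower-is-parabolic}: the subgroup appearing there is produced, via \cite[Prop.~4.4]{elementary-paper}, as the reflection subgroup generated by the set $J$ of atoms of $I$. Since $I$ is an order ideal of $[e,v]$ containing $e$, every atom of $I$ is an atom of $[e,v]$ and hence an atom of Bruhat order on $S_n$, i.e.\ a simple transposition; thus $J\subseteq S$ and the reflection subgroup generated by $J$ is the standard parabolic $W_J$. This gives the stated form $I=[e,v]\cap W_J$ with $J$ the set of atoms of $I$.

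There is essentially no obstacle: all the work is already contained in \Cref{prop:dc-order-ideal-of-lower-is-parabolic} and the cited results of \cite{elementary-paper} and \cite{billey-fan-losonczy}. The only point to keep straight is the (routine) observation that the atoms of the order ideal $I$ are among the simple generators, so that the reflection subgroup they generate is genuinely a \emph{standard} parabolic.
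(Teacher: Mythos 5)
Your proposal is correct and matches the paper's treatment: the paper derives \Cref{cor:hcd-of-lower-is-parabolic} as an immediate consequence of \Cref{prop:dc-order-ideal-of-lower-is-parabolic}, whose proof already identifies the subgroup as the standard parabolic generated by the atoms of $I$ (which are simple reflections since $I$ is an order ideal containing $e$). Your verification of (HD1) and (HD2) and the remark that (HD3) is unused is exactly the intended specialization.
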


\begin{remark}
Although $[e,v] \cap W_J$ is a principal diamond-closed order ideal for any choice of $v$ and $J$, it is not always a hypercube decomposition of $[e,v]$.
\end{remark}

The parabolic subgroups of $S_n$ are of the form $S_{B_1} \times \cdots \times S_{B_b}$, where $B_1 \sqcup \cdots \sqcup B_b = [n]$, each \emph{block} $B_a$ is a nonempty set of consecutive integers, and where $S_B$ denotes the group of permutations of the block $B$. We assume that we have numbered the blocks so that the elements of $B_a$ are less than those of $B_{a'}$ for $a<a'$. Note that some blocks may be singletons.

Now fix any hypercube decomposition $I$ of $[e,v]$ and let $B_1,\ldots,B_b$ be the blocks associated to the parabolic subgroup $W_J$, with $J$ the atoms of $I$. Since $I=[e,v] \cap W_J$ by \Cref{cor:hcd-of-lower-is-parabolic}, we note:
\begin{obs}
\label{obs:Ye-from-blocks}
The elements of $\cY_e$ are precisely the reflections $(i \: j) \leq v$ such that $i$ and $j$ belong to different blocks.
\end{obs}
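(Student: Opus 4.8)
The plan is to simply unwind the definition of $\cY_e$ using two facts already in hand: the description of the out-edges of $e$ in the Bruhat graph, and the identification $I=[e,v]\cap W_J$ from \Cref{cor:hcd-of-lower-is-parabolic}.

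First I would observe that, by the definition of the Bruhat graph, there is an edge $e\to y$ exactly when $ye^{-1}=y$ is a reflection, the length condition $\ell(e)<\ell(y)$ being automatic. Passing to the induced subgraph $\Gamma(e,v)$ imposes only the extra requirement $y\le v$, so the out-neighbors of $e$ in $\Gamma(e,v)$ are precisely the transpositions $t\le v$. Since $\cY_e=\{y\in[e,v]\setminus I\mid e\to y\}$, this gives
\[ \cY_e = \{\, (i\,j)\le v \;\mid\; (i\,j)\notin I \,\}. \]

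Next I would use $I=[e,v]\cap W_J$. As every transposition in the displayed set already satisfies $(i\,j)\le v$, membership in $I$ is equivalent to membership in $W_J$. Finally, since $W_J=S_{B_1}\times\cdots\times S_{B_b}$ is exactly the group of permutations stabilizing each block $B_a$ setwise, a transposition $(i\,j)$ lies in $W_J$ if and only if $i$ and $j$ lie in the same block. Negating this condition yields the statement of the observation.

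I do not expect any real obstacle here; the observation is a routine translation between the group-theoretic and graph-theoretic pictures. The only point requiring a little care is that one works inside the induced subgraph $\Gamma(e,v)$ rather than all of $\Gamma$, so that the out-edges of $e$ reach only those reflections which are $\le v$.
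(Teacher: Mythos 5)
Your proof is correct and matches the paper's reasoning: the paper records this as an immediate observation following the identification $I=[e,v]\cap W_J$ from \Cref{cor:hcd-of-lower-is-parabolic}, and your unwinding of the definition of $\cY_e$ together with the fact that a transposition lies in $W_J$ exactly when its two entries share a block is precisely that argument.
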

We make the convention of writing all reflections as $(i \: j)$ with $i<j$. We now further analyze the structure of $\cY_e$.

\begin{lemma}
\label{lem:bruhat-on-reflections}
    We have $(i \: j) \leq (k \: l)$ in Bruhat order if and only if $k\leq i<j \leq l$.
\end{lemma}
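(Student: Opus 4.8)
The plan is to deduce both directions from the \emph{subword property} of Bruhat order (\cite[Thm.~2.2.2]{Bjorner-Brenti}) together with the standard description of the parabolic subgroups of $S_n$. Throughout, I would fix the reduced word
\[ \underline{w} \;=\; (s_{l-1}s_{l-2}\cdots s_{k+1}s_k)\,(s_{k+1}s_{k+2}\cdots s_{l-1}) \]
for the transposition $(k \: l)$; it has $2(l-k)-1 = \ell\bigl((k \: l)\bigr)$ letters, all drawn from $\{s_k,\dots,s_{l-1}\}$.

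For the ``if'' direction, assume $k\leq i<j\leq l$. Then $(i \: j)$ has the analogous reduced word $(s_{j-1}\cdots s_i)(s_{i+1}\cdots s_{j-1})$. Since $k\leq i$ and $j\leq l$, the decreasing run $s_{j-1},\dots,s_i$ is a contiguous subword of the decreasing half $s_{l-1},\dots,s_k$ of $\underline{w}$, and the increasing run $s_{i+1},\dots,s_{j-1}$ is a subword of the increasing half $s_{k+1},\dots,s_{l-1}$; concatenating, $(i \: j)$ is a subword of $\underline{w}$, so $(i \: j)\leq(k \: l)$ by the subword property.

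For the ``only if'' direction, assume $(i \: j)\leq(k \: l)$. By the subword property, $(i \: j)$ is obtained by deleting letters from $\underline{w}$, so it lies in the subgroup generated by $\{s_k,\dots,s_{l-1}\}$, namely the parabolic subgroup $S_{\{k,k+1,\dots,l\}}$ of permutations fixing $[n]\setminus[k,l]$ pointwise. Since $(i \: j)$ moves $i$ and $j$, both must lie in $[k,l]$, i.e.\ $k\leq i<j\leq l$. There is no substantive obstacle here: the only place where more than a length comparison is needed is precisely this last implication, and the only thing to watch is the off-by-one between a transposition $(a \: b)$ and the simple reflections $s_a,\dots,s_{b-1}$ that build it. (A self-contained alternative avoiding the subword property is the rank-matrix criterion: for $i<j$ one checks that the rank matrix of $(i \: j)$ agrees with that of the identity except that it is decreased by $1$ at every pair of indices $(p,q)$ with $i\leq p<j$ and $i\leq q<j$, after which $(i \: j)\leq(k \: l)$ becomes the containment $[i,j-1]\times[i,j-1]\subseteq[k,l-1]\times[k,l-1]$.)
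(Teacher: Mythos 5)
Your proof is correct. For the ``if'' direction you do essentially what the paper does: exhibit a reduced word for $(i\:j)$ as a subword of a reduced word for $(k\:l)$ and invoke the subword property (the paper builds its words starting from $s_i$ and going up-then-down, you go down-then-up; either choice works, and your bookkeeping of the off-by-one between the transposition $(a\:b)$ and the letters $s_a,\dots,s_{b-1}$ is accurate). Where you genuinely diverge is the ``only if'' direction: the paper cites the Tableau Criterion \cite[Theorem 2.6.3]{Bjorner-Brenti}, whereas you stay inside the subword property, observing that any element below $(k\:l)$ has a reduced word in the letters $\{s_k,\dots,s_{l-1}\}$, hence lies in the standard parabolic subgroup $S_{\{k,\dots,l\}}$ and fixes every point outside $[k,l]$; since $(i\:j)$ moves $i$ and $j$, this forces $k\leq i<j\leq l$. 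Your route is slightly more self-contained (one tool instead of two) and the support argument is the one that generalizes verbatim to other Coxeter groups, while the paper's citation of the Tableau Criterion is a quicker, $S_n$-specific appeal to a standard reference; both are complete proofs.
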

\begin{proof} 
    The reverse implication follows immediately from the Subword Property for Bruhat order (see, e.g. \cite[Thm.~2.2.2]{Bjorner-Brenti}) after noting that the reduced word $s_i s_{i+1} \cdots s_{j-1} s_j s_{j-1} \cdots s_{i+1} s_i$ for $(i \: j)$ is a subword of the analogously constructed reduced word for $(k \: \ell)$. The forwards implication follows from the Tableau Criterion \cite[Theorem 2.6.3]{Bjorner-Brenti}.
\end{proof}

\begin{lemma}
\label{lem:no-S3}
    If $(i \: j) \in \cY_e$, then for all $k$ with $i<k<j$ exactly one of $(i \: k)$ and $(k \: j)$ lies in $\cY_e$.
\end{lemma}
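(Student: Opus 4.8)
The plan is to reduce the statement to a fact about the block partition of $[n]$ and then invoke condition (HD3). By \Cref{lem:bruhat-on-reflections} we have $(i \: k) \le (i \: j)$ and $(k \: j) \le (i \: j)$, so both $(i \: k)$ and $(k \: j)$ lie below $v$. Hence, by \Cref{obs:Ye-from-blocks}, $(i \: k) \in \cY_e$ exactly when $i$ and $k$ lie in different blocks, and $(k \: j) \in \cY_e$ exactly when $k$ and $j$ lie in different blocks; moreover $i$ and $j$ lie in different blocks, since $(i \: j) \in \cY_e$. If $k$ lies in the block of $i$, or in the block of $j$, then precisely one of $(i \: k), (k \: j)$ belongs to $\cY_e$, and $k$ cannot lie in the (distinct) blocks of both. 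So it suffices to rule out the possibility that $k$ lies in a third block, in which case \emph{both} $(i \: k)$ and $(k \: j)$ would belong to $\cY_e$.

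Suppose for contradiction that this happens, so $(i \: k), (k \: j) \in \cY_e$. These two reflections are incomparable in Bruhat order by \Cref{lem:bruhat-on-reflections}, so $Y \coloneqq \{(i \: k), (k \: j)\}$ is an antichain contained in $\cY_e$, i.e. $Y \in \cA_e$. By (HD3) there is a hypercube cluster at $e$, so there is a \emph{unique} diamond in $\Gamma(e,v)$ spanned by $Y$ with bottom vertex $e$. I would reach a contradiction by exhibiting two such diamonds, with distinct top vertices.

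The two candidate tops are the two $3$-cycles supported on $\{i,k,j\}$, which I would write as $w_1 \coloneqq (i \: j)(i \: k)$ and $w_2 \coloneqq (i \: j)(k \: j)$. First, each lies below $v$: the one-line notation of $(i \: j)$ has $j$ in position $i$, $k$ in position $k$, and $i$ in position $j$, so right multiplication by $(i \: k)$ is length-decreasing (as $j > k$) and right multiplication by $(k \: j)$ is length-decreasing (as $k > i$); thus $w_1, w_2 < (i \: j) \le v$. Second, each spans a diamond with bottom $e$: the edges $e \to (i \: k)$ and $e \to (k \: j)$ are present since $(i \: k),(k \: j)$ are reflections, while $w_m(i \: k)^{-1}$ and $w_m(k \: j)^{-1}$ are again transpositions (supported on subsets of $\{i,k,j\}$) and $\ell(w_m) = 2(j-i)-2$ strictly exceeds both $\ell((i \: k)) = 2(k-i)-1$ and $\ell((k \: j)) = 2(j-k)-1$, so the edges $(i \: k) \to w_m$ and $(k \: j) \to w_m$ are present as well. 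The four vertices $e, (i \: k), (k \: j), w_m$ are distinct, so this is a genuine $2$-hypercube; and since $w_1 \ne w_2$ these are two distinct diamonds spanned by $Y$ with bottom $e$, contradicting (HD3). Hence $k$ lies in the block of $i$ or of $j$, and the claim follows from the first paragraph.

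The one step that needs care is the observation that the hypothesis $(i \: j) \le v$ forces \emph{both} $3$-cycles on $\{i,k,j\}$ — not merely the reflections $(i \: k),(k \: j)$ — into $[e,v]$; this is exactly what makes the diamond spanned by $Y$ non-unique and so contradicts the hypercube cluster axiom. Everything else is routine bookkeeping in $S_n$: the length computations for the transpositions and $3$-cycles supported on $\{i,k,j\}$, the incomparability of $(i \: k)$ and $(k \: j)$, and the identification of the edge labels $w_m(i \: k)^{-1}$, $w_m(k \: j)^{-1}$ as transpositions.
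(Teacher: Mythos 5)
Your proof is correct and rests on the same central argument as the paper's: if both $(i\:k)$ and $(k\:j)$ lay in $\cY_e$, then the two $3$-cycles supported on $\{i,k,j\}$, each below $(i\:j)\le v$, would give two distinct diamonds in $\Gamma(e,v)$ spanned by this antichain with bottom vertex $e$, contradicting the uniqueness required by the hypercube cluster at $e$. The only (minor, legitimate) difference is how the remaining possibility is excluded: you use the block description of $\cY_e$ from \Cref{obs:Ye-from-blocks} to see that both transpositions cannot lie in $I$, whereas the paper rules this out directly by diamond-closure (both in $I$ would force $(i\:j)\in I$); since the Observation precedes the lemma, your route introduces no circularity.
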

\begin{proof}
    By \Cref{lem:bruhat-on-reflections}, $(i \: k)$ and $(k \: j)$ are incomparable in Bruhat order, and both are less than $(i \: j)$. It cannot be that both lie in $\cY_e$, since this would violate uniqueness in the hypercube cluster at $e$: we would have diamonds $e \rightrightarrows (i \: k), (k \: j) \rightrightarrows (i \: j \: k)$ and $e \rightrightarrows (i \: k), (k \: j) \rightrightarrows (i \: k \: j)$ since $(i \: k \: j), (i \: j \: k) \leq (i \: j)$. On the other hand, it cannot be that both lie in $I$, since this would imply that $(i \: j) \in I$ by diamond-closure.
\end{proof}

\begin{corollary}
\label{cor:consecutive-blocks}
    If $(i \: j) \in \cY_e$, then $i$ and $j$ lie in consecutive blocks.
\end{corollary}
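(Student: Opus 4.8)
The plan is to argue by contradiction. Suppose $(i \: j) \in \cY_e$ with $i \in B_a$ and $j \in B_{a'}$, where (by \Cref{obs:Ye-from-blocks}) $a \neq a'$, say $a < a'$, and assume toward a contradiction that $a' \geq a+2$. The point is that with an entire block $B_{a+1}$ wedged strictly between $B_a$ and $B_{a'}$, one can exhibit three reflections that are all forced into $\cY_e$ in a pattern forbidden by \Cref{lem:no-S3}.

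Concretely, I would set $p := \max B_a$ and $q := \min B_{a'}$. First I would record the elementary numerology coming from the block structure: $\min B_{a+1} = p + 1$, and since $B_{a+1}$ lies strictly below $B_{a'}$ (as $a + 1 < a'$) we have $\max B_{a+1} < q$; hence $p < p+1 < q$, and $p, p+1, q$ occupy the three distinct blocks $B_a, B_{a+1}, B_{a'}$. Next, since $i \leq p < q \leq j$, \Cref{lem:bruhat-on-reflections} yields $(p \: q), (p \: p+1), (p+1 \: q) \leq (i \: j) \leq v$. Each of these three reflections has its two indices in different blocks, so \Cref{obs:Ye-from-blocks} places all three in $\cY_e$. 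Finally, applying \Cref{lem:no-S3} to $(p \: q) \in \cY_e$ with the intermediate index $p+1 \in (p, q)$ asserts that \emph{exactly one} of $(p \: p+1)$ and $(p+1 \: q)$ lies in $\cY_e$ --- contradicting the previous sentence. Therefore $a' = a + 1$.

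There is no substantive obstacle here; the only care points are verifying the strict inequality $p + 1 < q$ (this is exactly where the hypothesis that there is an \emph{intermediate} block, rather than merely $a \neq a'$, is used) and checking that all three relevant index pairs straddle block boundaries so that \Cref{obs:Ye-from-blocks} applies. Both are immediate from the definition of the blocks together with \Cref{lem:bruhat-on-reflections}.
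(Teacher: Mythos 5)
Your proof is correct and uses essentially the same ingredients and strategy as the paper: a contradiction derived from \Cref{obs:Ye-from-blocks}, \Cref{lem:bruhat-on-reflections}, and \Cref{lem:no-S3}. The only cosmetic difference is that the paper applies \Cref{lem:no-S3} directly to $(i\:j)$ with an intermediate index $k$ chosen from the in-between block, whereas you apply it to the auxiliary reflection $(p\:q)$ with intermediate index $p+1$; both work equally well.
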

\begin{proof}
    If not, then $\exists k$ with $i<k<j$ such that $k$ lies in a different block than both $i$ and $j$. By \Cref{obs:Ye-from-blocks} and \Cref{lem:bruhat-on-reflections}, we see that both $(i \: k)$ and $(k \: j)$ lie in $\cY_e$, but this contradicts \Cref{lem:no-S3}.
\end{proof}

\begin{lemma}
\label{lem:no-nested-edges}
    If $(i \: j) \in \cY_e$, then $i+1,\ldots,j-1$ lie in the same block.
\end{lemma}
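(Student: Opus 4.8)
The plan is to proceed by contradiction. By \Cref{cor:consecutive-blocks}, $i$ lies in a block $B_a$ and $j$ in the next block $B_{a+1}$; put $m:=\max B_a$, so that $m+1=\min B_{a+1}$ and $i\le m<m+1\le j$. The integers $i+1,\dots,j-1$ lie in a single block precisely when $i=m$ or $j=m+1$, so assume instead that $i<m$ and $m+1<j$.

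I would first use \Cref{lem:no-S3} to pin down part of $\cY_e$. Since $(i\:m)\le(i\:j)\le v$ has both indices in $B_a$, it lies in $I$ by \Cref{obs:Ye-from-blocks}, and likewise $((m+1)\:j)\in I$. Applying \Cref{lem:no-S3} to $(i\:j)$ at the intermediate values $m$ and $m+1$ therefore forces $(m\:j)\in\cY_e$ and $(i\:(m+1))\in\cY_e$; by \Cref{lem:bruhat-on-reflections} these two reflections are incomparable, so $\{(m\:j),(i\:(m+1))\}\in\cA_e$. (Iterating this argument in fact places the whole ``rectangle'' of reflections $(k\:k'')$ with $i\le k\le m<m+1\le k''\le j$ into $\cY_e$, but I expect only the corner elements above to be needed.)

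The heart of the proof is to contradict (HD3). Consider $x:=(i\:m)((m+1)\:j)$: it lies in $W_J$, and a short Bruhat computation gives $x\le(i\:j)\le v$, so $x\in I$. Right-multiplying $x$ by the two cross-block reflections yields elements $a:=x\,(i\:(m+1))$ and $b:=x\,(m\:j)$, which one checks lie in $\cY_x$ and are incomparable, so $\{a,b\}\in\cA_x$. A direct computation --- of exactly the kind appearing in the proof of \Cref{lem:no-S3} --- shows that the only element of $S_n$ other than $x$ whose products with $a^{-1}$ and with $b^{-1}$ are both reflections is the involution $t_0:=(i\:j)(m\:(m+1))$. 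Hence if $t_0\not\le v$ there is no hypercube in $\Gamma(e,v)$ spanned by $\{a,b\}$ with bottom $x$, contradicting (HD3) at $x$. It then remains to rule out the case $t_0\le v$, where that diamond does exist: here one must produce a higher-dimensional obstruction instead, for instance by enlarging $\{a,b\}$ to a larger antichain of $\cY_x$ whose spanning hypercube would force a vertex strictly above $t_0$ in $[e,v]$. In the relevant ``local'' configuration $t_0$ plays the role of the top element of a parabolic subgroup --- exactly as $(1\:4)(2\:3)$ does for the blocks $\{1,2\},\{3,4\}$ in $S_4$ --- so no such vertex can exist. I anticipate that organizing this last step uniformly in $v$ will be the main technical obstacle: the cleanest route is probably an induction that peels off one pair of blocks at a time, reducing to the $S_4$ picture where the obstruction is visible by hand. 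An alternative worth trying is to combine the forced relation $(i\:(m+1))(m\:j)\le v$ with diamond-closure (HD2), applied to the diamond on $\{e,(i\:m),((m+1)\:j),x\}$, to force a contradiction already inside $\cY_e$.
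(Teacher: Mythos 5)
Your first half is sound and runs parallel to the paper's argument: you place $(i\:m)$, $((m+1)\:j)$, and $x=(i\:m)((m+1)\:j)$ in $I$ (the paper gets this from \Cref{lem:no-S3} plus diamond-closure rather than from the parabolic description, but both are fine), you extract $(m\:j),(i\:(m+1))\in\cY_e$, and you aim to contradict (HD3) at $x$ --- exactly the paper's strategy. The sub-case you actually complete, $t_0=(i\:j)(m\:(m+1))\not\leq v$, where the two-element antichain $\{a,b\}\subset\cY_x$ spans no diamond in $\Gamma(e,v)$, is correct.

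The gap is that the complementary case $t_0\leq v$ is not a fringe case to be ``ruled out'' --- it is the main case (already $v$ with $t_0\le v$ and $I$ containing $x$ is the generic situation the lemma must exclude) --- and there your argument stops at a sketch. The paper closes it uniformly, without any case split, by taking the full antichain of the four $4$-cycles $y$ with $x\to y\leq(i\:j)$ (each lies outside $I$ since it is $\geq(m\:(m+1))$), so that (HD3) at $x$ would force a $4$-hypercube with bottom $x$ spanned by them; then \cite[Lemma 3.1]{Dyer-bruhat-graph} confines any such hypercube to the reflection subgroup $S_{\{i,m,m+1,j\}}\cong S_4$, whose Bruhat graph has a unique $4$-hypercube, and its bottom is $(m\:(m+1))$, not $x$. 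Your proposed mechanism (``the spanning hypercube would force a vertex strictly above $t_0$, and no such vertex can exist'') only becomes a contradiction \emph{after} such a localization: in $[e,v]$ itself there may well be many elements above $t_0$, and $t_0$ is the longest element of the reflection subgroup $S_{\{i,m,m+1,j\}}$, not of a standard parabolic, so the nonexistence claim needs the confinement lemma you never supply --- indeed this is exactly the step you flag as ``the main technical obstacle.'' Your suggested alternative, diamond-closure on $\{e,(i\:m),((m+1)\:j),x\}$, gives nothing new, since all four of those vertices already lie in $I$. So as written the proof is incomplete; completing it requires either the paper's four-element antichain together with Dyer's reflection-subgroup lemma, or an equivalent argument pinning the forced hypercube inside $S_{\{i,m,m+1,j\}}$.
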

\begin{proof}
    Suppose to the contrary that some $i',j'$ with $i<i'<j'<j$ lie in different blocks. Since $(i \: j) \in \cY_e$, we have in particular that $(i \: j) \leq v$, and so $(i' \: j') \leq v$ by \Cref{lem:bruhat-on-reflections}. Moreover, $(i' \: j') \in \cY_e$ by \Cref{obs:Ye-from-blocks}.

    Now, since $I$ is an order ideal and since $(i' \: j')$ is less than $(i \: j')$ and $(i' \: j)$ and both of these are less than $(i \: j)$ by \Cref{lem:bruhat-on-reflections}, it must be that $(i \: j'), (i' \: j) \in \cY_e$. By \Cref{lem:no-S3}, this implies that $(i \: i'), (j' \: j) \in I$, since $(i' \: j') \in \cY_e$. By diamond-closure, we see that the product $(i \: i')(j' \: j)$ also lies in $I$. There are four elements $y \leq (i\:j)$ with $(i \: i')(j' \: j) \to y$ in $\Gamma(e,v)$, namely $(i \: i' \: j \: j'), (i \: j' \: j \: i'), (i \: j \: j' \: i'),$ and $(i \: i' \: j' \: j)$, which are incomparable. All four of these elements are greater than $(i'\: j')$ and therefore do not lie in $I$ and so form an antichain in $\cA_{(i \: i')(j' \: j)}$. Thus these four elements span a $4$-hypercube $\mathcal{H}$ in $\Gamma(e,v)$; furthermore, $\mathcal{H}$ is contained in the reflection subgroup $W'=S_{\{i,i',j',j\}}$ by \cite[Lemma 3.1]{Dyer-bruhat-graph}. But one can easily check that the Bruhat graph of $W'$ (which by \cite{Dyer-bruhat-graph} can be identified with $\Gamma(e,(i\:j)(i'\:j'))$) contains only one $4$-hypercube $\mathcal{H}'$ and that $\mathcal{H}'$ has bottom vertex $(i' \: j')$ rather than $(i \: i')(j' \: j)$, a contradiction.
\end{proof}

\begin{corollary}
\label{cor:arcs-use-endpoint}
    Suppose $(i \: j) \in \cY_e$ and let $B_{a}$ and $B_{a+1}$ be the blocks containing $i$ and $j$, respectively. Then $i=\max(B_a)$ or $j=\min(B_{a+1})$.
\end{corollary}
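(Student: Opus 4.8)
The plan is to deduce this directly from \Cref{lem:no-nested-edges} together with the elementary structure of the blocks. Recall that $B_1,\ldots,B_b$ are intervals of consecutive integers partitioning $[n]$, numbered in increasing order, so that $\max(B_a)+1=\min(B_{a+1})$ for every $a$. Since $(i\:j)\in\cY_e$, \Cref{cor:consecutive-blocks} already tells us that $i$ and $j$ lie in consecutive blocks, and \Cref{lem:no-nested-edges} tells us that $i+1,\ldots,j-1$ all lie in a single block.

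First I would argue by contradiction: suppose $i\neq\max(B_a)$ and $j\neq\min(B_{a+1})$. Because $B_a$ is an interval of integers and $i\in B_a$, the assumption $i\neq\max(B_a)$ forces $i+1\in B_a$; symmetrically, $j\neq\min(B_{a+1})$ forces $j-1\in B_{a+1}$. Next I would note the chain of inequalities
\[ i+1\leq\max(B_a)<\min(B_{a+1})\leq j-1, \]
so that $i+1$ and $j-1$ are two \emph{distinct} integers, both lying strictly between $i$ and $j$, but in different blocks $B_a$ and $B_{a+1}$. This contradicts \Cref{lem:no-nested-edges}, which asserts that $i+1,\ldots,j-1$ all lie in one block. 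Hence $i=\max(B_a)$ or $j=\min(B_{a+1})$.

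The only point requiring a moment's care is the degenerate case $j=i+1$, where the range $i+1,\ldots,j-1$ is empty: there $i=\max(B_a)$ and $j=\min(B_{a+1})$ hold automatically from $i\in B_a$, $j\in B_{a+1}$, and the fact that the blocks are consecutive, so the conclusion is immediate. I do not anticipate any genuine obstacle in this corollary: the substantive work has already been done in proving \Cref{lem:no-nested-edges}, and what remains is just the bookkeeping observation that an ``arc'' $(i\:j)\in\cY_e$ cannot straddle a block boundary with slack on both sides — it must be anchored at $\max(B_a)$ or at $\min(B_{a+1})$.
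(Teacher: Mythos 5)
Your proposal is correct and is essentially the paper's argument: assuming $i\neq\max(B_a)$ and $j\neq\min(B_{a+1})$ forces $i+1\in B_a$ and $j-1\in B_{a+1}$, contradicting \Cref{lem:no-nested-edges}. The extra attention to the degenerate case $j=i+1$ is harmless but not needed, since in that case the conclusion holds trivially because the blocks are consecutive intervals.
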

\begin{proof}
    If not, then $i+1 \in B_a$ while $j-1 \in B_{a+1}$, contradicting \Cref{lem:no-nested-edges}.
\end{proof}

\subsection{Antichains, camels, and caravans}
\label{sec:camels}

We now extract the implications of the results of \Cref{sec:blocks} for the set of antichains $\cA_e$ in $(\cY_e, \leq)$. As before, we let $I$ be a fixed hypercube decomposition of a lower interval $[e,v] \subset W=S_n$.

We can construct a directed graph called the \emph{caravan} $\car(Y)$ of an antichain $Y \in \cA_e$, having vertices $[n]$ drawn in increasing order from left to right, grouped according to the blocks induced by $I$, and with an arc from $i$ to $j>i$ whenever $(i \: j) \in \cY_e$. \Cref{lem:bruhat-on-reflections} and \Cref{lem:no-nested-edges} guarantees that the arcs do not nest. \Cref{cor:consecutive-blocks,cor:arcs-use-endpoint} imply that each arc connects vertices in consecutive blocks and begins at the maximal vertex of its left block or ends at the minimal vertex of its right block (see \Cref{ex:camel}). We call the non-singleton connected components of a caravan \emph{camels}. Let $\preceq$ denote the total order on the set of camels according to their minimal vertices.

\begin{example}
\label{ex:camel}
Consider the interval $[e,361245]$ with $z=214356$ defining a hypercube decomposition $I=[e,z]$. Then 
\[
Y = \{(1,3),(2,4),(4,5),(5,6)\}
\]
is an antichain in $\cA_e$. The caravan $\car(Y)$ is depicted in \Cref{fig:camel}.

\begin{figure}
    \centering
    \begin{tikzpicture}
        \fill[pink, opacity=.5] (.75,-.5) rectangle (2.25,.5);
        \fill[pink, opacity=.5] (2.75,-.5) rectangle (4.25,.5);
        \fill[pink, opacity=.5] (4.75,-.5) rectangle (5.25,.5);
        \fill[pink, opacity=.5] (5.75,-.5) rectangle (6.25,.5);
        \node (1) at (1,0) {$1$};
        \node (2) at (2,0) {$2$};
        \node (3) at (3,0) {$3$};
        \node (4) at (4,0) {$4$};
        \node (5) at (5,0) {$5$};
        \node (6) at (6,0) {$6$};
        \draw[thick] (1.north) edge[bend left] (3.north);
        \draw[thick] (2.north) edge[bend left] (4.north) (4.north) edge[bend left] (5.north) (5.north) edge[bend left] (6.north);
    \end{tikzpicture}
    \caption{An example of a caravan with four blocks (shaded in red) and two camels. All edges are implicitly directed towards the right.}
    \label{fig:camel}
\end{figure}
\end{example}

\begin{proposition}
\label{prop:antichains-are-camels}
Let $v \in W=S_n$ and let $I$ be any hypercube decomposition of $[e,v]$, and let $Y \in \cA_e$. Then the caravan $\car(Y)$ has the following properties:
\begin{enumerate}
    \item \label{item:in-degree} Every vertex of $\car(Y)$ has in-degree and out-degree each at most $1$.
    \item \label{item:crossing-arcs} Suppose $(i \: j), (i' \: j') \in Y$ have $i<i'<j<j'$, then $j=\min(B_{a+1})$ and $i'=\max(B_a)$ for some $a$.
    \item \label{item:crossing-camels} Let $C \prec C'$ be camels of $\car(Y)$. Then either $\max(C)<\min(C')$ or $\max(C)=\min(C')+1$, in which case $\max(C)$ and $\min(C')$ are the minimal and maximal vertices in their blocks, respectively.
\end{enumerate}
\end{proposition}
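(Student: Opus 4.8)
We treat the three parts separately.

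\textbf{Part (1).} This will follow at once from the antichain hypothesis together with \Cref{lem:bruhat-on-reflections}. If some vertex $j$ were the common target of two arcs $(i\,j),(k\,j)$ of $\car(Y)$ with $i<k<j$, then $k\le i<j\le j$ gives $(k\,j)\le(i\,j)$ in Bruhat order; dually, two arcs $(j\,k),(j\,l)$ with $j<k<l$ satisfy $(j\,k)\le(j\,l)$. In either case we contradict that $Y$ is an antichain.

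\textbf{Part (2).} Write $i\in B_a$ and $j\in B_{a+1}$ (\Cref{cor:consecutive-blocks}). By \Cref{lem:no-nested-edges} the set $\{i+1,\dots,j-1\}$, which is nonempty as it contains $i'$, lies in a single block; that block is thus $B_a$ or $B_{a+1}$. Likewise $\{i'+1,\dots,j'-1\}$ lies in a single block, and since it contains $j$ that block is the one containing $j$. The plan is a short dichotomy on which block $\{i+1,\dots,j-1\}$ is. If it is $B_a$, then $j-1\in B_a$ forces $j=\min(B_{a+1})$ and $i'\in B_a$; then $\{i'+1,\dots,j'-1\}$ contains $j\in B_{a+1}$, so it is contained in $B_{a+1}$, whence $i'+1\in B_{a+1}$ and $i'=\max(B_a)$, which is the claim. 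If instead $\{i+1,\dots,j-1\}\subseteq B_{a+1}$, then $i+1\in B_{a+1}$ forces $i=\max(B_a)$ and puts both $i'$ and $j$ in $B_{a+1}$; this is the case that must be excluded, and doing so is the crux of the argument.

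To rule it out: the reflection $(i'\,j)$ has both endpoints in $B_{a+1}$ and satisfies $(i'\,j)\le(i\,j)\le v$, so $(i'\,j)\in I$ by \Cref{cor:hcd-of-lower-is-parabolic}. Then \Cref{lem:no-S3}, applied to $(i\,j)$ at $i'$ and to $(i'\,j')$ at $j$, gives $(i\,i'),(j\,j')\in\cY_e$. Since $\{(i\,i'),(i'\,j')\}$ is an antichain in $\cA_e$, (HD3) provides a unique hypercube spanned by it with bottom $e$; this hypercube lies in the reflection subgroup $S_{\{i,i',j'\}}\cong S_3$, so its top is one of the two $3$-cycles on $\{i,i',j'\}$. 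Their Bruhat join (which is realized by a genuine permutation) is the transposition $(i\,j')$, and this cannot lie below $v$: otherwise $(i\,j')\in\cY_e$ by \Cref{obs:Ye-from-blocks}, contradicting \Cref{cor:consecutive-blocks} since $i$ and $j'$ lie in the non-consecutive blocks $B_a$ and $B_{a+2}$. Hence at most one of the two $3$-cycles lies below $v$. Eliminating the remaining possibility---that exactly one does---is where I expect the real work: I would analyze the induced Bruhat graph on the rank-$\le 4$ reflection subgroup $S_{\{i,i',j,j'\}}$, along the lines of the proof of \Cref{lem:no-nested-edges}, to exhibit either an antichain in some $\cA_x$ ($x\in I$) with no associated hypercube, or a $4$-hypercube whose forced bottom vertex is incompatible with $I$. \emph{This exceptional case is the main obstacle; the rest of the argument is routine bookkeeping with the lemmas of this section.}

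\textbf{Part (3).} Part (1) shows that in $\car(Y)$ no vertex has degree exceeding $2$, and since every arc increases there is no (directed or undirected) cycle, so each weakly connected component---in particular each camel---is a simple path, moreover monotone increasing along its vertices. Let $C\prec C'$ and suppose $\max(C)\ge\min(C')$, the only nontrivial case. The vertex $\min(C')$ is the bottom endpoint of the path $C'$, say with outgoing arc $(\min(C')\,r)$; since $C$ has vertices both below and weakly above $\min(C')$, some arc $(i\,j)$ of $C$ has $i<\min(C')\le j$, in fact $i<\min(C')<j$ because $C\cap C'=\varnothing$. As $(i\,j)$ and $(\min(C')\,r)$ are incomparable, \Cref{lem:bruhat-on-reflections} forces $j<r$, so $(i\,j),(\min(C')\,r)$ is a crossing pair; part (2) pins it at a block boundary, giving $j=\min(B_{a+1})$, $\min(C')=\max(B_a)=j-1$, and $r\in B_{a+1}$ with $r>j$ (so $|B_{a+1}|\ge 2$). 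Finally $C$ cannot extend above $j$: if it did, $j$ would acquire an outgoing arc, which by \Cref{cor:arcs-use-endpoint} must be $(j\,\min(B_{a+2}))$ since $j\neq\max(B_{a+1})$; but then $(\min(C')\,r),(j\,\min(B_{a+2}))$ is a crossing pair, and part (2) would force the contradictory conclusion $j=\max(B_{a+1})$. Hence $\max(C)=j=\min(B_{a+1})=\max(B_a)+1=\min(C')+1$, with $\max(C)$ and $\min(C')$ the minimum and maximum of their respective blocks.
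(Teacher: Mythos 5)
Parts (1) and (3) are correct and essentially match the paper's argument (your part (3) even patches the $\max(C)\neq\min(C')$ and $j\neq\max(B_{a+1})$ points cleanly). The problem is part (2), on which part (3) depends: you never actually rule out the bad case $i=\max(B_a)$ (so $i',j\in B_{a+1}$, $j'\in B_{a+2}$). Your 3-cycle observation is a dead end: showing $(i\:j')\not\leq v$ only tells you that at most one of the two $3$-cycles on $\{i,i',j'\}$ lies below $v$, and "exactly one" is precisely what (HD3) at $e$ predicts for the antichain $\{(i\:i'),(i'\:j')\}$ — so no contradiction can come from this. You then explicitly defer the elimination of this case ("where I expect the real work", "I would analyze \dots"), so the proof as written is incomplete exactly at its crux. (A smaller issue: the claim that the Bruhat join of the two $3$-cycles exists and equals $(i\:j')$ is true but asserted without proof; it becomes moot once this branch is abandoned.)

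For comparison, the paper closes this case by using the hypercube condition at a \emph{non-identity} element of $I$, which your sketch gestures at but does not carry out. Since both crossing arcs lie in the antichain $Y$, the diamond over $e$ spanned by $(i\:j),(i'\:j')$ has its top $(i\:j)(i'\:j')$ in $[e,v]$; one checks $j'=\min(B_{a+2})$, that $(i'\:j)\in I$, and that the four elements $(i'\:j\:i),(i'\:i\:j),(i'\:j'\:j),(i'\:j\:j')$ are incomparable, lie below $(i\:j)(i'\:j')$, and lie outside $I$, hence form an antichain in $\cA_{(i'\:j)}$. (HD3) at $(i'\:j)$ then forces a $4$-hypercube in $\Gamma(e,v)$ spanned by them, which by \cite[Lemma 3.1]{Dyer-bruhat-graph} is contained in $S_{\{i,i',j,j'\}}\cong S_4$; the unique $4$-hypercube there has top $(i\:j')$, so $(i\:j')\in\cY_e$, contradicting \Cref{lem:no-S3} since $(i\:i'),(i'\:j')\in\cY_e$. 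Supplying this antichain, verifying its properties, and invoking the uniqueness of the $4$-hypercube in $S_4$ is the missing substance; until it is included, part (2) — and with it part (3) — is not proved.
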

\begin{proof}
Since $Y$ is an antichain, no vertex can have in-degree greater than $1$, since the two arcs would necessarily nest, and therefore by \Cref{lem:bruhat-on-reflections} the two reflections would be comparable; the same is true for out-degree. This proves (\ref{item:in-degree}).

Now suppose $(i \: j), (i' \: j') \in Y$ with $i<i'<j<j'$. By \Cref{cor:arcs-use-endpoint}, we have $i=\max(B_a)$ or $j=\min(B_{a+1})$ for some $a$. Since $j-i \geq 2$, we cannot have both. So suppose first that $i=\max(B_a)$. In this case, since arcs connect consecutive blocks, we have $i',j \in B_{a+1}$ and $j' \in B_{a+2}$. Thus $i'$ is not the maximum element of its block, so we must have $j'=\min(B_{a+2})$ by \Cref{cor:arcs-use-endpoint}. Now, since $(i \: j), (i' \: j') \in Y$, we must have $(i \: j)(i' \: j') \leq v$ by the hypercube condition, since this is the top of the unique diamond in $\Gamma$ containing $e, (i \: j),$ and $(i' \: j')$. By \Cref{obs:Ye-from-blocks}, we have $(i' \: j) \in I$, since $(i' \: j) < (i \: j) < v$ but $i'$ and $j$ lie in the same block. By \Cref{lem:no-S3} this implies that $(i \: i'), (j \: j') \in \cY_e$. 

There are edges $(i' \: j) \to y$ for each $y \in Y'=\{(i' \: j \: i), (i' \: i \: j), (i' \: j' \: j), (i' \: j \: j')\}$. Furthermore, these four elements are incomparable, each element $y$ lies below $(i \: j)(i' \: j')$, and hence in $[e,v]$, and each lies outside of $I$, since $I$ is an order ideal and $y \geq (i \: i') \not \in I$ or $y \geq (j \: j') \not \in I$. Thus there must be a $4$-hypercube spanned by these elements. As argued in the proof of \Cref{lem:no-nested-edges}, this hypercube must be contained in the subgraph of $\Gamma(e,v)$ on $S_{\{i,i',j,j'\}}\cong S_4$. An easy check shows that there is a unique 4-hypercube in $S_4$, and that the top of that hypercube is $(i\: j')$. Hence we must have $\theta_{(i' \: j)}(Y')=(i \: j')$, so in particular, $(i \: j') \in \cY_e$. But this contradicts \Cref{lem:no-S3}, since $(i \: i'), (i' \: j') \in \cY_e$. Thus it must instead be the case that $j=\min(B_{a+1})$, with $i \in B_a$. This implies that $i' \in B_a$ and so $j' \in B_{a+1}$ and $j'$ is not minimal in its block. Another application of \Cref{cor:arcs-use-endpoint} implies that $i'=\max(B_a)$, so we have proven (\ref{item:crossing-arcs}).

Now suppose that $C \prec C'$ are camels of $\car(Y)$ with $\max(C)>\min(C')\eqqcolon i'$. Let $y'=(i' \: j')$ be the leftmost arc of $C'$. By hypothesis, we have $\min(C)<i'<\max(C)$, so $i<i'<j$ for some arc $(i \: j)$ of $C$. Since $Y$ is an antichain, we must have $j'>j$, otherwise by \Cref{lem:bruhat-on-reflections} we would have $y' < (i \: j)$. By part (\ref{item:crossing-arcs}) we must have $j=\min(B_{a+1})$ and $i'=\max(B_a)$ for some $a$. Suppose that $(i \: j)$ is not the rightmost arc of $C$, so there is some $(j \: k)$ in $C$. By \Cref{cor:consecutive-blocks}, we have $k \in B_{a+2}$ and hence $i'<j<j'<k$. But now part (\ref{item:crossing-arcs}) implies that $j'=\min(B_{a+1})$, so $j=j'$, a contradiction. Thus  $(i \: j)$ is the rightmost arc of $C$ and so $j=\max(C)=\min(B_{a+1})=\max(B_a)+1=i'+1=\min(C')+1$, proving (\ref{item:crossing-camels}).
\end{proof}

\Cref{prop:antichains-are-camels}(\ref{item:crossing-camels}) tells us that the supports $[\min(C),\max(C)]$ for different camels $C$ are almost disjoint: the only possible overlaps are at the ends of the camels, where they might overlap on two elements. This means that the subsets of $Y \in \cA_e$ corresponding to different camels can almost be treated independently, with a small adjustment for the possible overlap. We say that the pair $y$ and $y'$ of arcs of $C$ and $C'$ as in \Cref{prop:antichains-are-camels}(\ref{item:crossing-arcs}) are \emph{crossing}. All other pairs of arcs are \emph{noncrossing}. We similarly say that camels $C$ and $C'$ are crossing or noncrossing according to whether they contain such a pair of arcs. We let $\cross(Y)$ denote the number of pairs of crossing camels in $\car(Y)$; note that, by \Cref{prop:antichains-are-camels}, any pair of crossing camels must be consecutive in the total order $\prec$. For example, the caravan $\car(Y)$ depicted in \Cref{fig:camel} has $\cross(Y) = 1$.

\begin{proposition}
\label{prop:lambda-facts}
Let $v \in W=S_n$ and let $I$ be any hypercube decomposition of $[e,v]$. 
\begin{enumerate}
    \item \label{item:singleton-lambda} For $y=(i \: j) \in \cY_e$, we have $|\Lambda(\{y\})|=j-i$.
    \item \label{item:intersection} Let $y=(i\:j)$ and $y'=(i' \: j')$ be incomparable elements of $\cY_e$, with $i<i'$. Then $\Lambda(\{y\}) \cap \Lambda(\{y'\})= \emptyset$ if $y,y'$ are noncrossing, and $\Lambda(\{y\}) \cap \Lambda(\{y'\})=\{s_{i'}\}$ if $y,y'$ are crossing (in which case, by \Cref{prop:antichains-are-camels}, we have $j=i'+1$).
\end{enumerate}
\end{proposition}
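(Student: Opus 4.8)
The plan is to reduce both parts to elementary counting over the blocks $B_1,\ldots,B_b$ induced by $I$, using \Cref{lem:bruhat-on-reflections} to describe the relevant principal order ideals of $(\cY_e,\leq)$ explicitly and \Cref{obs:Ye-from-blocks} to characterize membership in $\cY_e$. The key observation throughout is that once a reflection $(k\:l)$ satisfies $i\leq k<l\leq j$ for some $(i\:j)\in\cY_e$, it automatically satisfies $(k\:l)\leq v$ by \Cref{lem:bruhat-on-reflections}, so lying in $\cY_e$ amounts to the single condition that $k$ and $l$ lie in different blocks.

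For (\ref{item:singleton-lambda}), write $y=(i\:j)$; then by \Cref{lem:bruhat-on-reflections} and \Cref{obs:Ye-from-blocks},
\[ \Lambda(\{y\}) = \{(k\:l) : i\leq k<l\leq j,\ k \text{ and } l \text{ lie in different blocks}\}. \]
By \Cref{lem:no-nested-edges} the integers $i+1,\ldots,j-1$ all lie in one block, and by \Cref{cor:consecutive-blocks} the endpoints $i$ and $j$ lie in consecutive blocks $B_a,B_{a+1}$. \Cref{cor:arcs-use-endpoint} then gives two (not mutually exclusive) cases: if $i=\max(B_a)$ then $\{i+1,\ldots,j\}\subseteq B_{a+1}$, so $i$ is the unique element of $\{i,\ldots,j\}$ outside $B_{a+1}$; if $j=\min(B_{a+1})$ then $\{i,\ldots,j-1\}\subseteq B_a$, so $j$ is the unique element of $\{i,\ldots,j\}$ outside $B_a$. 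In either case the pairs $k<l$ in $\{i,\ldots,j\}$ with $k,l$ in different blocks are exactly those containing that lone exceptional endpoint, of which there are $j-i$; when both cases occur simultaneously one has $j=i+1$ and the count is again $j-i=1$. Hence $|\Lambda(\{y\})|=j-i$.

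For (\ref{item:intersection}), \Cref{lem:bruhat-on-reflections} gives $\Lambda(\{y\})\cap\Lambda(\{y'\})=\{(k\:l)\in\cY_e : \max(i,i')\leq k<l\leq\min(j,j')\}$. I would first note that incomparability of $y=(i\:j)$ and $y'=(i'\:j')$ together with $i<i'$ forces $j<j'$: the only Bruhat relation between $y$ and $y'$ not excluded by $i<i'$ (via \Cref{lem:bruhat-on-reflections}) is $y'\leq y$, which would require $j'\leq j$. Thus the set above becomes $\{(k\:l)\in\cY_e : i'\leq k<l\leq j\}$. If $i'\geq j$ this is empty; since $y,y'$ are crossing precisely when $i<i'<j<j'$ (cf.\ \Cref{prop:antichains-are-camels}(\ref{item:crossing-arcs})), this is exactly the noncrossing case. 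If $i'<j$, then $\{y,y'\}$ is an antichain in $\cA_e$ with $i<i'<j<j'$, so \Cref{prop:antichains-are-camels}(\ref{item:crossing-arcs}) gives $i'=\max(B_a)$ and $j=\min(B_{a+1})$ for some $a$, whence $j=i'+1$; the unique pair $k<l$ with $i'\leq k<l\leq j$ is then $(i'\:i'+1)$, which lies in $\cY_e$ since $i'\in B_a$ and $i'+1=j\in B_{a+1}$, so $\Lambda(\{y\})\cap\Lambda(\{y'\})=\{s_{i'}\}$. I do not anticipate a genuine obstacle here: the content is entirely bookkeeping on top of \Cref{sec:blocks,sec:camels}, and the only steps meriting attention are the invocations of \Cref{cor:arcs-use-endpoint} in (\ref{item:singleton-lambda}) and of \Cref{prop:antichains-are-camels}(\ref{item:crossing-arcs}) in (\ref{item:intersection}).
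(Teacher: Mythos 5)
Your proof is correct and follows essentially the same route as the paper: both arguments describe $\Lambda(\{y\})$ explicitly via \Cref{lem:bruhat-on-reflections}, \Cref{obs:Ye-from-blocks}, and \Cref{cor:arcs-use-endpoint}, and settle the crossing case of (\ref{item:intersection}) by invoking \Cref{prop:antichains-are-camels}(\ref{item:crossing-arcs}) to get $j=i'+1$. The only differences are cosmetic bookkeeping (counting pairs straddling the exceptional endpoint rather than listing $\{(i\:k)\}$ or $\{(k\:j)\}$, and spelling out that incomparability forces $j<j'$).
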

\begin{proof}
Let $y=(i \: j) \in \cY_e$. If $j-i=1$, then (\ref{item:singleton-lambda}) is clear. So assume $j-i>1$; by \Cref{cor:arcs-use-endpoint} we have $i=\max(B_a)$ or $j=\min(B_{a+1})$ (and not both). Suppose without loss of generality that $i=\max(B_a)$, so $j \neq \min(B_{a+1})$. For each $k=i+1,\ldots,j$, we have $(i \: k) \leq (i \: j)$ by \Cref{lem:bruhat-on-reflections} and so $(i \: k) \in \Lambda(\{y\})$ by \Cref{obs:Ye-from-blocks}. Furthermore, these are all of the elements of $\Lambda(\{y\})$ since the other candidates $(i' \: j')$ with $i<i'< j'\leq j$ are not in $\cY_e$ by \Cref{obs:Ye-from-blocks} and the fact that such $i',j'$ lie in the same block $B_{a+1}$. This proves (\ref{item:singleton-lambda}).

Now let $y=(i\:j),y'=(i' \: j') \in \cY_e$ be incomparable, with $i<i'$. Then $j' > j$. If $y,y'$ are noncrossing, then $i'\geq j$, and it is clear by the previous paragraph that $\Lambda(\{y\})$ and $\Lambda(\{y'\})$ are disjoint. Otherwise $i<i'<j<j'$, so $y$ and $y'$ are crossing and, by \Cref{prop:antichains-are-camels}, $j=i'+1$. In this case $\Lambda(\{y\})=\{(k \: j) \mid i \leq k < j\}$ and $\Lambda(\{y'\})=\{(i' \: k') \mid i' < k' \leq j'\}$, whose intersection is $\{(i' \: j)\}=\{s_{i'}\}$, proving (\ref{item:intersection}).
\end{proof}

\begin{lemma}
\label{lem:almost-reduced-product-gives-hypercube}
    Let $v \in W=S_n$, let $I$ be any hypercube decomposition of $[e,v]$, and let $Y \in \cA_e$. Let $\eta = y_1 \cdots y_{|Y|}$ be the product of the elements of $Y$ in some order. Then there is a hypercube subgraph of $\Gamma$ spanned by $Y$ with top element $\eta$ (note that $\eta$ may not lie in $[e,v]$).
\end{lemma}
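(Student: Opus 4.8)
The plan is to write down the hypercube explicitly. Put $m=|Y|$, fix the order $y_1,\dots,y_m$ so that $\eta=y_1\cdots y_m$, and for $S\subseteq Y$ let $g_S$ denote the product of the elements of $S$ taken in the induced order; thus $g_\varnothing=e$, $g_{\{y_i\}}=y_i$, and $g_Y=\eta$. I would show that the vertex set $\{g_S:S\subseteq Y\}$, together with the arrows $g_S\to g_{S\cup\{y\}}$, is a subgraph of $\Gamma$ isomorphic to $\cH_m$; granting this, its bottom vertex is $g_\varnothing=e$ with out-neighbors exactly the $g_{\{y\}}=y$ (so it is spanned by $Y$), and its top vertex is $g_Y=\eta$, which proves the lemma. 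The abstract isomorphism type is immediate once the $g_S$ are known to be pairwise distinct, so the only points to check are: (a) each $g_S\to g_{S\cup\{y\}}$ is genuinely an arrow of $\Gamma$; and (b) the $2^m$ elements $g_S$ are distinct.

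The reflection condition in (a) is formal: if $S=\{y_{i_1},\dots,y_{i_r}\}$ with $i_1<\cdots<i_r$ and $y=y_k$ with $i_j<k<i_{j+1}$, then $g_{S\cup\{y\}}g_S^{-1}=(y_{i_1}\cdots y_{i_j})\,y_k\,(y_{i_1}\cdots y_{i_j})^{-1}$ is conjugate to the reflection $y_k$. So the real content of (a) is the inequality $\ell(g_S)<\ell(g_{S\cup\{y\}})$, which I would control using the camel decomposition $Y=Y_1\sqcup\cdots\sqcup Y_c$ of \Cref{sec:camels}, writing $S_a=S\cap Y_a$. Distinct camels have disjoint vertex sets, so elements of different camels commute and $g_S=g^1_{S_1}\cdots g^c_{S_c}$, where $g^a_{S_a}$ is the induced-order product of $S_a$ and the commuting factors may be reordered freely. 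Since the arcs of a single camel have pairwise disjoint supports, arguing exactly as in the proof of \Cref{prop:theta-lengths} gives $\ell(g^a_{S_a})=\sum_{y\in S_a}\ell(y)$ and $\supp(g^a_{S_a})=\bigsqcup_{y\in S_a}\supp(y)$, and the supports of $g^a_{S_a}$ and $g^b_{S_b}$ for $a\neq b$ are disjoint except that a pair of consecutive crossing camels contributes a single shared simple reflection precisely when $S$ contains both arcs of their crossing pair. Cancelling these overlaps then yields $\ell(g_S)=\sum_{y\in S}\ell(y)-2\cross(S)$, where $\cross(S)$ is the number of crossing pairs of arcs contained in $S$; hence $\ell(g_{S\cup\{y\}})-\ell(g_S)=\ell(y)-2\,|\{y'\in S:y,y'\text{ crossing}\}|$.

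To see this difference is positive I would use \Cref{prop:antichains-are-camels}: an arc $y$ is crossed only by the rightmost arc of the preceding camel and/or the leftmost arc of the following one, so $\kappa(y):=|\{y'\in Y:y,y'\text{ crossing}\}|\le 2$; if $\kappa(y)\ge 1$ then an endpoint of a crossing arc lies strictly between the endpoints of $y=(i\:j)$, forcing $j-i\ge 2$ and $\ell(y)\ge 3$; and if $\kappa(y)=2$ then $y$ is the only arc of its camel and the two arcs crossing it lie in different camels and so share no vertex, which forces $j-i\ge 3$ and $\ell(y)\ge 5$. In all cases $\ell(y)\ge 2\kappa(y)+1$, so $\ell(g_{S\cup\{y\}})-\ell(g_S)\ge\ell(y)-2\kappa(y)\ge 1$, proving (a). For (b): since distinct camels have disjoint vertex sets, the restriction of $g_S=g^1_{S_1}\cdots g^c_{S_c}$ to the vertex set of $C_a$ equals $g^a_{S_a}$, and $g^a_{S_a}$ is recovered from its support $\bigsqcup_{y\in S_a}\supp(y)$ because distinct arcs of a camel have disjoint supports; so $g_S$ determines $S$.

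I expect the main obstacle to be the length bookkeeping that produces $\ell(g_S)=\sum_{y\in S}\ell(y)-2\cross(S)$ — specifically, verifying that each crossing of consecutive camels forces exactly one shared simple reflection between the two corresponding factors (and hence exactly a $-2$ correction), with no other failure of additivity. Since the proof of \Cref{prop:theta-lengths} already carries out essentially this computation for the specific products $\theta_e(Y_a)$ and does not use which order within a camel is taken, I would adapt that argument directly; everything past the length formula is then the short numerical argument above.
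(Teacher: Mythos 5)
Your proposal is correct and follows essentially the same route as the paper's proof: both extend the length formulas of \Cref{prop:theta-lengths} to arbitrary (sub)products of the arcs, deduce the Bruhat edges from the inequality $\ell(y)>2\,(\text{number of arcs crossing }y)$ supplied by \Cref{prop:antichains-are-camels}, and obtain distinctness of the $2^{|Y|}$ vertices from the (near-)disjointness of the supports of the arcs. Your write-up is somewhat more explicit than the paper's --- verifying the edge condition at every level $g_S\to g_{S\cup\{y\}}$ and spelling out the bound $\ell(y)\geq 2\kappa(y)+1$ --- but it is the same argument.
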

\begin{proof}
    A reduced word for a reflection $t=(i \: j)$ contains simple reflections $s_i,\ldots,s_{j-1}$. These sets of simple reflections for $t$ in the same camel $Y_a$ are disjoint, so the product of the elements of $Y_a$ in any order is length-additive. The reflections appearing in different camels commute, so we may reorder the product $\eta$ as a product from left to right of its induced subproducts on each camel. The sets of simple reflections appearing in each of these factors are now disjoint except that there is a single simple reflection shared by any camels which cross, so the product is still increasing in length. Therefore deleting any term in the product yields an $\eta'$ with $\ell(\eta')<\ell(\eta)$ so $\eta' \to \eta$. Finally, note that each $y_a$ contains some simple reflection in its support not contained in the support of any other $y_{a'}$. This ensures that deleting two different subsets of the terms of the product yields different elements. 
\end{proof}

\begin{proposition}
\label{prop:theta-facts}
Let $v \in W=S_n$ and let $I$ be any hypercube decomposition of $[e,v]$. Let $Y \in \cA_e$ and let $Y=Y_1 \sqcup \cdots \sqcup Y_c$ be its decomposition according to the camels of $\car(Y)$.
\begin{enumerate}
    \item \label{item:connected-component} For each $1 \leq a \leq c$, $\theta_e(Y_a)$ is given by the product of the elements of $Y_a$ in some order.
    \item \label{item:whole-antichain} The $\{\theta_e(Y_a)\}$ commute and $\theta_e(Y) = \prod_{a=1}^c \theta_e(Y_a)$. 
\end{enumerate} 
\end{proposition}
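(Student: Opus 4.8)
The plan is to prove parts (\ref{item:connected-component}) and (\ref{item:whole-antichain}) in turn, first extracting the combinatorial shape of an individual camel and then, for (\ref{item:whole-antichain}), exploiting a box‑product decomposition of the spanning hypercube. I would begin by pinning down the shape of a camel: by \Cref{prop:antichains-are-camels}(\ref{item:in-degree}) every vertex of $\car(Y)$ has total degree at most two, so each connected component of $\car(Y)$ is a path or a cycle; increasingness of arcs rules out cycles, and along a path the arcs must be coherently oriented (otherwise some vertex would get in‑ or out‑degree two). Hence each camel $C_a$ is an increasing directed path $v_0\to v_1\to\cdots\to v_m$, so that $Y_a=\{(v_{t-1}\:v_t):1\le t\le m\}$ is precisely the set of simple reflections of the reflection subgroup $S_{V(C_a)}\cong S_{m+1}$, where $V(C_a)=\{v_0<\cdots<v_m\}$. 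I would also upgrade \Cref{prop:antichains-are-camels}(\ref{item:crossing-camels}) to the statement that the vertex sets $V(C_a)$ are pairwise \emph{disjoint}: for $C\prec C'$ this is clear when $\max(C)<\min(C')$, and in the remaining case the proof of that proposition produces a crossing pair $(i\:j)\in C$, $(i'\:j')\in C'$ with $i<i'<j<j'$ in which $(i\:j)$ is the last arc of $C$ and $(i'\:j')$ the first arc of $C'$; thus the last two vertices of the increasing path $C$ are $i<j$ and the first two of $C'$ are $i'<j'$, and $i<i'<j<j'$ keeps the two increasing vertex sequences disjoint. Finally I would record the general fact that a hypercube embedded in a Cartesian (box) product $G_1\,\square\cdots\square\,G_c$ of graphs factors as $\cH^{(1)}\square\cdots\square\cH^{(c)}$, where each $\cH^{(a)}$ is a hypercube in $G_a$ realized as a face of the original: each of the abstract edge‑directions changes exactly one of the $c$ box‑coordinates, and this assignment is well defined because any two edges of a fixed direction are joined by a chain of $2$‑faces, and a $2$‑face of the embedded hypercube is a $4$‑cycle of the box product, hence uses at most two box‑coordinates and pairs its opposite edges.

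For part (\ref{item:connected-component}), by \cite[Lemma 3.1]{Dyer-bruhat-graph} the unique hypercube of $\Gamma(e,v)$ spanned by $Y_a$ with bottom $e$ is contained in $\langle Y_a\rangle=S_{V(C_a)}$, so it suffices to prove: in $S_N$, any hypercube in the Bruhat graph with bottom $e$ whose edges out of $e$ carry all of the simple reflections $\sigma_1,\dots,\sigma_{N-1}$ has top equal to a product $\sigma_{\pi(1)}\cdots\sigma_{\pi(N-1)}$ for some $\pi$. I would argue by induction on $N$. The top $w$ lies above every vertex of the hypercube, so each $\sigma_i\le w$, whence $w$ has full support and $\ell(w)\ge N-1$. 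For each $p$, discarding direction $p$ gives a hypercube with bottom $e$ spanned by $\{\sigma_i:i\ne p\}$, which by \cite[Lemma 3.1]{Dyer-bruhat-graph} lies in the proper parabolic $\langle\sigma_i:i\ne p\rangle$; decomposing this face over the factors of that parabolic (by the box‑product fact above) and applying the inductive hypothesis to each factor shows that its top is a product of $\{\sigma_i:i\ne p\}$, of length $N-2$. Combining the facts that $w$ has full support in $S_N$ and is joined by a Bruhat edge to each of these $N-1$ codimension‑one tops of length $N-2$ should force $\ell(w)=N-1$; and a full‑support element of $S_N$ of length $N-1$ is automatically a product of all the simple reflections, each exactly once, which completes the induction.

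For part (\ref{item:whole-antichain}), note first that since the $V(C_a)$ are pairwise disjoint, the subgroups $S_{V(C_a)}$ centralize one another, so the elements $\theta_e(Y_a)\in S_{V(C_a)}$ commute. Next, by \cite[Lemma 3.1]{Dyer-bruhat-graph} the hypercube $\cH$ of $\Gamma(e,v)$ spanned by $Y$ with bottom $e$ lies in $W_Y:=\langle Y\rangle=S_{V(C_1)}\times\cdots\times S_{V(C_c)}$, and a direct check on one‑line notation—the position of a letter of $V(C_a)$ in a permutation of $W_Y$ depends only on its $S_{V(C_a)}$‑component, so left multiplication by a transposition supported in $V(C_a)$ changes $\ell$ in $\Gamma(e,v)$ exactly as it does in $S_{V(C_a)}$—shows that the subgraph of $\Gamma(e,v)$ on $W_Y$ is the box product of the subgraphs $G_a$ on the $S_{V(C_a)}$. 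Factoring $\cH=\cH^{(1)}\square\cdots\square\cH^{(c)}$ by the observation above, each $\cH^{(a)}$ is a face of $\cH$, hence lies in $\Gamma(e,v)$, and is a hypercube of $G_a$ with bottom $e$ whose edges out of $e$ are exactly the elements of $Y$ inside $S_{V(C_a)}$, namely $Y_a$. By uniqueness in the hypercube cluster at $e$ (HD3), the top of $\cH^{(a)}$ equals $\theta_e(Y_a)$, and therefore $\theta_e(Y)$, the top of $\cH$, equals $\prod_a\theta_e(Y_a)$.

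The hard part will be the length identity $\ell(w)=N-1$ used in part (\ref{item:connected-component}). A naive count of hypercube edges does not establish it, since a hypercube with bottom $e$ can have a top far longer than its dimension—already $\theta_e(\{(1\:3)\})=(1\:3)$ has length $3$, so maximal chains of a hypercube need not be saturated in Bruhat order. The argument has to exploit the graph‑theoretic rigidity of the hypercube together with its confinement to a \emph{single} symmetric group—for instance through the inductive analysis of the codimension‑one faces above, or, equivalently, through the fact that the Bruhat interval below a Coxeter element of $S_N$ is a Boolean lattice of rank $N-1$ whose Bruhat graph is $\cH_{N-1}$, so that the hypercubes in question are exactly the graphs $\Gamma(e,c)$ for Coxeter elements $c$. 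The box‑product decomposition behind part (\ref{item:whole-antichain}) is, by comparison, routine.
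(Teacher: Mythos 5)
The decisive step of part (\ref{item:connected-component}) is missing from your argument. After reducing (via the containment of the hypercube in the reflection subgroup $S_{V(C_a)}$) to the claim that a hypercube in the Bruhat graph of $S_N$ with bottom $e$ spanned by all the simple reflections has top a product of them, everything hinges on showing $\ell(w)=N-1$ for the top $w$. You assert that full support of $w$ together with the Bruhat-graph edges from the $N-1$ codimension-one face tops (each of length $N-2$, by your induction) ``should force'' $\ell(w)=N-1$, but you never supply this deduction, and an edge of the Bruhat graph from an element of length $N-2$ gives no upper bound on $\ell(w)$ (the edge need not be a cover, as your own example $\theta_e(\{(1\:3)\})=(1\:3)$ shows). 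Your closing paragraph concedes this is ``the hard part,'' and the alternative you sketch --- that the hypercubes in question ``are exactly the graphs $\Gamma(e,c)$ for Coxeter elements $c$'' because $[e,c]$ is Boolean --- is circular: knowing that $\Gamma(e,c)$ is a hypercube does not rule out a hypercube with bottom $e$ spanned by the simple reflections whose top is some longer element, which is precisely what must be excluded. So as it stands part (\ref{item:connected-component}) is not proved.

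For comparison, the paper closes exactly this gap without any length analysis of the top: it inducts on $|Y_a|$, handling $|Y_a|=2$ by uniqueness of the diamond inside $[e,v]$ (HD3), and for $|Y_a|\geq 3$ it observes that the candidate product $\eta$ and $\theta_e(Y_a)$ both receive edges from all the face tops $\theta_e(Y_a\setminus\{y\})$, so if they differed one would get a directed $K_{3,2}$ in the Bruhat graph, which is impossible by reduction to rank-two reflection subgroups via \cite[Lemma 3.1]{Dyer-bruhat-graph}. If you want to keep your parabolic-reduction framing, you would need an argument of comparable strength (the no-$K_{3,2}$ property, or \Cref{lem:almost-reduced-product-gives-hypercube} together with a uniqueness statement) rather than the unproven length claim. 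Your treatment of part (\ref{item:whole-antichain}) --- the box-product factorization of the hypercube over $S_{V(C_1)}\times\cdots\times S_{V(C_c)}$, identifying each factor with a face through $e$ spanned by $Y_a$ and invoking HD3-uniqueness --- is a reasonable alternative to the paper's route (which simply reruns the $K_{3,2}$ argument and notes that distinct camels move disjoint letters), and it has the mild advantage of not depending on part (\ref{item:connected-component}); but it does not repair the gap above, which is where the substance of the proposition lies.
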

\begin{proof}
We prove (\ref{item:connected-component}) by induction on $|Y_a|$. 
 
If $|Y_a|=2$, then $Y_a=\{(i_1 \: i_2), (i_2 \: i_3)\}$ for some $i_1<i_2<i_3$. It is easy to see that there are exactly two diamonds in $\Gamma$ spanned by $Y_a$: one with top $(i_1 \: i_2)(i_2 \: i_3)$ and the other with top $(i_2 \: i_3)(i_1 \: i_2)$. Since $I$ is a hypercube decomposition, exactly one of these two elements lies in $[e,v]$ and this element is by definition $\theta_e(Y_a)$. Now suppose that $|Y_a| \geq 3$. For each $y \in Y_a$ we have by induction that $\theta_e(Y_a \setminus \{y\})$ is a product of $Y_a \setminus \{y\}$ in some order. Moreover, we claim that these products are compatible in the sense that there is a product $\eta$ of the elements of $Y$ with $\theta_e(Y_a \setminus \{y\}) \to \eta$ for all $y \in Y_a$ (\emph{a priori} $\eta$ need not be in $[e,v]$). To see this, define a relation $\mathscr{R}$ on $Y_a$ with $(\zeta,\zeta') \in \mathscr{R}$ if $\zeta=\zeta'$ or if $\zeta, \zeta'$ do not commute and $\zeta$ appears before $\zeta'$ in one of the products $\theta_e(Y_a \setminus \{y\})$. Then $\mathscr{R}$ is antisymmetric since if $\zeta,\zeta'$ appeared in different orders in two of these products then \Cref{lem:almost-reduced-product-gives-hypercube} would imply that $\zeta\zeta', \zeta'\zeta \leq v$, contradicting the hypothesis that $I$ is a hypercube decomposition. Furthermore, the transitive closure $\mathscr{P}$ of $\mathscr{R}$ is a poset since if we have $(\zeta_1,\zeta_2),(\zeta_2,\zeta_3), \ldots, (\zeta_{k-1},\zeta_k) \in \mathscr{R}$ for $k \geq3$ distinct elements $\zeta_1,\ldots,\zeta_k$, then it follows by \Cref{prop:antichains-are-camels} that $\zeta_1,\zeta_k$ commute, so $(\zeta_k,\zeta_1) \not \in \mathscr{R}$. Thus, taking any linear extension of $\mathscr{P}$ we find a compatible product $\eta$. On the other hand, $\theta_e(Y_a \setminus \{y\}) \to \theta_e(Y_a)$ for all $y \in Y_a$. If $\eta \neq \theta_e(Y_a)$, then we have a directed complete bipartite subgraph $K_{3,2}$ of $\Gamma$. But by \cite[Lemma 3.1]{Dyer-bruhat-graph}, no such subgraph of the Bruhat graph of $S_n$ exists: if there were one, then there would be one in a rank-two reflection subgroup $S_3$ or $S_1 \times S_1$, but an easy check shows that there is not. Thus $\eta=\theta_e(Y_a)$ and we have proven (\ref{item:connected-component}). 

Let $y \in Y_a$ and $y' \in Y_{a'}$, for $a \neq a'$. Then $y$ and $y'$ commute since no element of $[n]$ is moved by both permutations. Thus, by (\ref{item:connected-component}), $\theta_e(Y_a)$ and $\theta_e(Y_{a'})$ commute. The fact that $\theta_e(Y) = \prod_{a=1}^c \theta_e(Y_a)$ can be argued as in part (\ref{item:connected-component}).
\end{proof}

\begin{proposition}
\label{prop:theta-lengths}
Let $v \in W=S_n$ and let $I$ be any hypercube decomposition of $[e,v]$. Let $Y \in \cA_e$ and let $Y=Y_1 \sqcup \cdots \sqcup Y_c$ be its decomposition according to the camels $C_1 \prec \cdots \prec C_c$ of $\car(Y)$. 
\begin{enumerate}
    \item \label{item:connected-component-length} If $Y_a=\{(i_1 \: i_2), (i_2 \: i_3), \cdots, (i_{d} \: i_{d+1})\}$, then $\ell(\theta_e(Y_a))=2(i_{d+1}-i_1)-d$.
    \item \label{item:whole-antichain-length} $\ell(\theta_e(Y))=\sum_{a=1}^c \ell(\theta_e(Y_a))-2\cross(Y)$.
\end{enumerate} 
\end{proposition}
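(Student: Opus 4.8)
The plan is to reduce both statements to length computations in $S_n$, using \Cref{prop:theta-facts}: part (\ref{item:connected-component}) says $\theta_e(Y_a)$ is a product, in some order, of the path transpositions $t_k := (i_k\,i_{k+1})$ of the camel $C_a$, and part (\ref{item:whole-antichain}) says $\theta_e(Y) = \prod_{a=1}^c \theta_e(Y_a)$, where the factors are pairwise commuting cycles with pairwise disjoint supports $V_a$.

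For part (\ref{item:connected-component-length}) I would prove the lemma that \emph{every} word $t_{\sigma(1)}t_{\sigma(2)}\cdots t_{\sigma(d)}$ in the path transpositions (over any ordering $\sigma$) is length-additive: $\ell(t_{\sigma(1)}\cdots t_{\sigma(d)}) = \sum_{k=1}^d \ell(t_k)$. Induct on $d$: with $P = t_{\sigma(1)}\cdots t_{\sigma(d-1)}$ and $t_{\sigma(d)} = (i_m\,i_{m+1})$, note that $P$ is a product of all path transpositions \emph{except} $(i_m\,i_{m+1})$, whose transposition graph is the disjoint union of the sub-paths on $\{i_1,\dots,i_m\}$ and on $\{i_{m+1},\dots,i_{d+1}\}$; hence $P$ preserves this partition, so $P(i_m)\le i_m < k < i_{m+1} \le P(i_{m+1})$ for every $k$ with $i_m < k < i_{m+1}$ (such $k$ lie outside $\{i_1,\dots,i_{d+1}\}$, hence are fixed by $P$). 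The standard formula for the length of $P$ composed with a transposition then gives $\ell(P\cdot(i_m\,i_{m+1})) = \ell(P) + 2(i_{m+1}-i_m) - 1 = \ell(P) + \ell(t_{\sigma(d)})$, completing the induction. Applied to $\theta_e(Y_a)$ this gives $\ell(\theta_e(Y_a)) = \sum_{k=1}^d (2(i_{k+1}-i_k)-1) = 2(i_{d+1}-i_1)-d$.

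For part (\ref{item:whole-antichain-length}) I would first deduce from \Cref{prop:antichains-are-camels}(\ref{item:crossing-camels}) that for $a<a'$ the intervals $[\min C_a,\max C_a]$ and $[\min C_{a'},\max C_{a'}]$ are disjoint, unless $a'=a+1$ and $C_a,C_{a+1}$ cross, in which case they meet exactly in the two points $\min C_{a+1}$ and $\min C_{a+1}+1 = \max C_a$ (the case $a' \ge a+2$ is excluded since it would force $\min C_{a'} = \max C_a$, contradicting that distinct camels are vertex-disjoint). Then induct on $c$, peeling off the last camel: $\theta_e(Y) = uv$ with $u = \theta_e(Y\setminus Y_c)$ supported in $[1,\max C_{c-1}]$ and $v = \theta_e(Y_c)$ supported in $[\min C_c,\max C_c]$. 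If $C_{c-1}$ and $C_c$ do not cross, these supports lie in disjoint intervals, so $\ell(uv)=\ell(u)+\ell(v)$; since $C_c$ then belongs to no crossing pair, $\cross(Y\setminus Y_c)=\cross(Y)$ and induction finishes. If they cross, put $p = \min C_c$ and $q = p+1 = \max C_{c-1}$; by \Cref{prop:antichains-are-camels} (parts (\ref{item:crossing-arcs}),(\ref{item:crossing-camels})) the rightmost arc of $C_{c-1}$ has left endpoint $\le p-1$ and the leftmost arc of $C_c$ has right endpoint $\ge q+1$, whence $V_{c-1}\subseteq [1,p-1]\cup\{q\}$, $V_c\subseteq \{p\}\cup[q+1,n]$, and — since the maximal vertex of a camel is incident to a single one of its path transpositions, and dually for the minimal vertex — $u(q)\le p-1$, $u^{-1}(q)\le p-1$, $v(p)\ge q+1$, $v^{-1}(p)\ge q+1$. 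Plugging these into the identity $\ell(uv) = \ell(u)+\ell(v) - 2|\mathrm{inv}(u)\cap\mathrm{inv}(v^{-1})|$, a short check of the pairs straddling the overlap shows that the only inversion common to $u$ and $v^{-1}$ is the adjacent pair $\{p,q\}$, so $\ell(\theta_e(Y)) = \ell(u)+\ell(v)-2$; since crossing camels are consecutive, $\cross(Y) = \cross(Y\setminus Y_c)+1$ here, and the induction closes.

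The step I expect to be the crux is the last one: verifying that no pair $(x,y)$ with $x$ near $\max C_{c-1}$ and $y$ near $\min C_c$ (or vice versa) gives a second common inversion of $u$ and $v^{-1}$. This is exactly where the displacement bounds $u^{\pm1}(q)\le p-1$ and $v^{\pm1}(p)\ge q+1$ are needed; everything else is bookkeeping with the camel combinatorics of \Cref{sec:camels}.
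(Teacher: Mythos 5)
Your argument is correct, and while it shares the paper's starting point --- using \Cref{prop:theta-facts} to reduce both statements to length computations for explicit products of arcs --- the execution is genuinely more hands-on than the paper's. For part (\ref{item:connected-component-length}) the paper simply notes that the arcs $(i_k \: i_{k+1})$ of a single camel have pairwise disjoint supports $\{s_{i_k},\ldots,s_{i_{k+1}-1}\}$, so the product in any order is length-additive; you reprove this additivity directly from the formula for $\ell(P\cdot(i_m\:i_{m+1}))$ in terms of values trapped between $P(i_m)$ and $P(i_{m+1})$. This works, but your lemma should be stated for products over an arbitrary subset of the camel's arcs (the transposition graph of $P$ is a union of two subpaths, not a path), since otherwise the inductive hypothesis does not literally apply to $P$; the identical argument covers that generality, as your own observation about the two subpaths shows. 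For part (\ref{item:whole-antichain-length}) the paper multiplies $\theta_e(Y_1)\cdots\theta_e(Y_c)$ and cancels, for each crossing consecutive pair, the single simple reflection lying in $D_R(\theta_e(Y_a))\cap D_L(\theta_e(Y_{a+1}))$; you instead peel off the last camel and invoke $\ell(uv)=\ell(u)+\ell(v)-2\,|\mathrm{Inv}(u)\cap\mathrm{Inv}(v^{-1})|$, pinning down $(p,q)=(p,p+1)$ as the unique common inversion in the crossing case. In effect your computation is a rigorous expansion of the paper's terse ``cancel the common descent'' step, at the cost of extra bookkeeping; what it needs from the camel combinatorics (crossing camels are consecutive in $\prec$, distinct camels are vertex-disjoint, and the extremal vertex of a camel meets a unique arc) is all available. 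Two small slips, neither fatal: your exclusion of $a'\ge a+2$ misquotes \Cref{prop:antichains-are-camels}(\ref{item:crossing-camels}) (the overlap case there is $\max(C_a)=\min(C_{a'})+1$, not $\min(C_{a'})=\max(C_a)$; what you actually need is that non-consecutive camels cannot cross, which gives the support separation directly); and to rule out further common inversions it already suffices that every inversion of $u$ has both entries at most $q$ and every inversion of $v^{-1}$ has both entries at least $p$ --- the displacement bounds $u^{\pm1}(q)\le p-1$, $v^{\pm1}(p)\ge q+1$ are what certify that $(p,q)$ \emph{is} a common inversion, giving exactly the $-2$ per crossing.
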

\begin{proof}
    Arguing as in \Cref{lem:almost-reduced-product-gives-hypercube} and using \Cref{prop:theta-facts}(\ref{item:connected-component}) we have 
    \[
    \ell(\theta_e(Y_a))=\sum_{d'=1}^d \ell((i_{d'} \: i_{d'+1})) = \sum_{d'=1}^d (2(i_{d'+1}-i_{d'})-1) = 2(i_{d+1}-i_1)-d,
    \]
    proving (\ref{item:connected-component-length}).

    Now, we have $\theta_e(Y) = \prod_{a=1}^c \theta_e(Y_a)$ by \Cref{prop:theta-facts}(\ref{item:whole-antichain}). Since this product commutes, we can take it in the order $\theta_e(Y_1) \cdots \theta_e(Y_c)$. The sets of simple reflections appearing in consecutive factors are now disjoint if the corresponding camels $C_i$ and $C_{i+1}$ are not crossing. If $C_i$ and $C_{i+1}$ are crossing, then by \Cref{prop:antichains-are-camels}(\ref{item:crossing-camels}) there is a unique simple reflection appearing in both factors, which is the unique element of $D_R(\theta_e(Y_i)) \cap D_L(\theta_e(Y_{i+1}))$ (where $D_R$ and $D_L$ denote right and left descent sets, respectively). Canceling this squared simple reflection in the product yields (\ref{item:whole-antichain-length}). 
\end{proof}

\subsection{Proof of \Cref{thm:hcds-of-lower-intervals-are-nice}}

We are now ready to prove \Cref{thm:hcds-of-lower-intervals-are-nice}, the main result of \Cref{sec:lower}.

\begin{proof}[Proof of \Cref{thm:hcds-of-lower-intervals-are-nice}]
Let $v \in W=S_n$ and fix a hypercube decomposition $I$ of $[e,v]$. Let $Y \in \cA_e$ be an antichain.

    We first prove (\ref{item:strong}). Suppose that $\Gamma(e,v)$ has a subgraph:
    \begin{equation}
    \label{eq:proof-strong-diamond}
        \begin{tikzcd}[cramped,sep=small] & w & \\ \theta_e(Y \cup \{y\})\ar[ur]& &\theta_e(Y \cup \{y'\})\ar[ul] \\ &\theta_e(Y)\ar[ul,"t"]\ar[ur,"t'"']& \end{tikzcd},
    \end{equation}
    where $Y \cup \{y\}, Y \cup \{y'\} \in \cA_e$. We need to show that $Y \cup \{y,y'\} \in \cA_e$ and that $\theta_e(Y \cup \{y,y'\})=w$.

    Suppose that $Y \cup \{y,y'\} \not \in \cA_e$, so, without loss of generality, we must have $y<y'$. By \Cref{lem:bruhat-on-reflections} and \Cref{cor:arcs-use-endpoint}, these elements are of the form $y=(i\:j)$ and $y'=(i \: j')$ for some $i<j<j'$ (or $y=(i\:j)$ and $y'=(i' \: j)$ with $i'<i<j$ which is analogous). By \Cref{prop:theta-facts} we know that $\theta_e(Y \cup \{y'\})=y_1 \cdots y_a y' y_{a+1} \cdots y_{|Y|}$ for some ordering $y_1,\ldots, y_{|Y|}$ of $Y$ and some $a$. This word is length-additive with the exception of the cancellation of some descents of consecutive reflections, as discussed in the proof of \Cref{prop:theta-lengths}. By the subword property (after accounting for this cancellation) we see that $\eta \coloneqq y_1 \cdots y_a y y_{a+1} \cdots y_{|Y|} < \theta_e(Y \cup \{y'\}) < v$. By \Cref{lem:almost-reduced-product-gives-hypercube}, we conclude that $\eta$ is the top of a hypercube subgraph of $\Gamma(e,v)$ spanned by $Y \cup \{y\}$. By the uniqueness of hypercubes, this implies that $\theta_e(Y \cup \{y\})=\eta < \theta_e(Y \cup \{y'\})$. A case check shows that any diamond subgraph of $\Gamma$ whose two middle vertices are comparable in Bruhat order must be spanned by reflections $(i_1 \: i_4)$ and $(i_2 \: i_3)$ with $i_1<i_2<i_3<i_4$ and $x(i_1)<x(i_2)<x(i_3)<x(i_4)$, where $x$ is the bottom vertex of the diamond. Applying this to the diamond (\ref{eq:proof-strong-diamond}), yields a contradiction, as the reflections $t,t'$ spanning this diamond are conjugates of $y$ and $y'$ by the same permutation (since $\theta_e(Y)=y_1\cdots y_{|Y|}$, again by uniqueness of hypercubes) and hence do not commute. Thus $Y \cup \{y,y'\} \in \cA_e$.

    Now, the only elements $\pi\in W$ with $\theta_e(Y \cup \{y\}),\theta_e(Y \cup \{y'\}) \to \pi$ are $tt'\theta_e(Y)$ and $t't\theta_e(Y)$; furthermore, one of these elements is $\theta_e(Y \cup \{y,y'\})$. Hence if $t,t'$ commute, we are done. Otherwise, by \Cref{prop:theta-facts} we have
    \begin{equation}
    \label{eq:theta-Y-cup-yyprime}
        \theta_e(Y \cup \{y,y'\})=y_1\cdots y_a y y_{a+1} \cdots y_b y' y_{b+1} \cdots y_{|Y|}
    \end{equation}
    where $y_1,\ldots,y_{|Y|}$ is some ordering of $Y$ and we have assumed without loss of generality that $y$ appears before $y'$. Suppose also that we have chosen the product (\ref{eq:theta-Y-cup-yyprime}) so as to minimize $b-a$. \Cref{lem:almost-reduced-product-gives-hypercube} implies that $\theta_e(Y \cup \{y'\}), \theta_e(Y \cup \{y\}),$ and $\theta_e(Y)$ are obtained by omitting $y, y',$ and both in the product, respectively. So $\theta_e(Y \cup \{y,y'\})=tt'\theta_e(Y)$ and $t=y_1\cdots y_ayy_a \cdots y_1$ and $t'=y_1\cdots y_by'y_b\cdots y_1$.

    Suppose that $w\neq \theta_e(Y \cup \{y,y'\})$, so $w=t't\theta_e(Y) \leq v$. It then follows from the Tableau Criterion \cite[Theorem 2.6.3]{Bjorner-Brenti} (arguing e.g. as in \cite[Lem.~5.3]{gaetz-gao-automorphism}) that the top element $tt't\theta_e(Y)=t'tt'\theta_e(Y)$ also lies in $[e,v]$. Since $b-a$ is minimized, $y,y_{a+1},\ldots,y_b,y'$ must correspond to a consecutive (either left-to-right or right-to-left) sequence of arcs in the same camel of $\car(Y\cup\{y,y'\})$. We compute
    \begin{align*}
        t'tt'\theta_e(Y)&=y_1 \cdots y_a (y_{a+1}\cdots y_by'y_b\cdots y_{a+1}yy_{a+1}\cdots y_by')y_{b+1}\cdots y_{|Y|} \\
        &\geq y_{a+1}\cdots y_by'y_b\cdots y_{a+1}yy_{a+1}\cdots y_by' \eqqcolon v'.
    \end{align*}
    where the inequality follows since each element of $Y \cup \{y,y'\}$ has in its support some simple reflection not in the support of any of the others (as in the proof of \Cref{lem:almost-reduced-product-gives-hypercube}). Using that $y,\ldots,y'$ are consecutive in the same camel and comparing one-line notations, we see $v \geq v' \geq yy_{a+1}, y_{a+1}y$ (or $v \geq v' \geq yy', y'y$ when $b-a=0$). This gives two hypercubes spanned by $\{y,y_{a+1}\}$ in $[e,v]$, violating uniqueness of hypercubes, a contradiction. This completes the proof of (\ref{item:strong}). 
    
    We now prove (\ref{item:nc}).  Let $Y_1 \sqcup \cdots \sqcup Y_c$ be the decomposition of $Y$ according to the camels $C_1 \prec \cdots \prec C_c$ of $\car(Y)$. Since the arcs within each camel $Y_a$ are noncrossing, \Cref{prop:lambda-facts} implies that $|\Lambda(Y_a)|=\max(C_a)-\min(C_a)$ (the result of a telescoping sum). On the other hand, by \Cref{prop:theta-lengths}(\ref{item:connected-component-length}) we have
    \[
    L_a \coloneqq \frac{\ell(\theta_e(Y_a))+|Y_a|}{2}=\max(C_a)-\min(C_a)=|\Lambda(Y_a)|.
    \]
    Then by \Cref{prop:lambda-facts}(\ref{item:intersection}) and \Cref{prop:antichains-are-camels}(\ref{item:crossing-camels}) we see that
    \[
    |\Lambda(Y)|=\sum_a |\Lambda(Y_a)| - \cross(Y)
    \]
    and by \Cref{prop:theta-lengths}(\ref{item:whole-antichain-length}) we have
    \[
    \frac{\ell(\theta_e(Y))+|Y|}{2}=\frac{\left(\sum_{a=1}^c \ell(\theta_e(Y_a))-2\cross(Y)\right)+\sum_{a=1}^c |Y_a|}{2}=\sum_{a=1}^c L_a - \cross(Y)=|\Lambda(Y)|.
    \]
    This proves (\ref{item:nc}).

    Finally, $I$ has property (E) by \Cref{cor:lowerisE}.
\end{proof}

\begin{proof}[Proof of \Cref{thm:BBDVWorig}]
The theorem follows immediately from \Cref{thm:hcds-of-lower-intervals-are-nice} and \Cref{cor:strong-nc-E-implies-bbdvw}.
\end{proof}

\section*{Acknowledgments}
GTB was supported by National Science Foundation grants DMS-2152991 and DMS-2503536. CG was partially supported by NSF grant DMS-2452032 and by a travel grant from the Simons Foundation. We are grateful to Francesco Brenti, Francesco Esposito, and Mario Marietti for interesting discussions during the workshop ``Bruhat order: recent developments and open problems" held at the University of Bologna, and to the anonymous referees for their helpful comments.

\bibliographystyle{halpha-abbrv}
\bibliography{arxiv-v3}
\end{document}